\newtheorem{thm}{Theorem}
\newtheorem{lem}[thm]{Lemma}
\newtheorem{obs}[thm]{Observation}
\newtheorem{claim}[thm]{Claim}
\newtheorem{conj}[thm]{Conjecture}
\newtheorem*{thm*}{Theorem}
\theoremstyle{definition}
\theoremstyle{remark}
\newcommand{\NOS}{\mathbb{N}}
\newcommand{\OS}{\mathbb{S}}
\newcommand{\eg}{\mathbf{eg}}
\def\ceil#1{\left\lceil #1\right\rceil}
\renewcommand{\le}{\leqslant}
\renewcommand{\leq}{\leqslant}
\renewcommand{\ge}{\geqslant}
\renewcommand{\geq}{\geqslant}
\begin{document}

\title{Improper coloring of graphs on surfaces}

\author{Ilkyoo Choi}
\address{Department of Mathematics, Hankuk University of Foreign Studies, Yongin-si, Gyeonggi-do, Republic of Korea}   
\email{ilkyoo@hufs.ac.kr}
\thanks{Ilkyoo Choi is supported by the Basic Science Research Program through the National Research Foundation of Korea (NRF) funded by the Ministry of Education (NRF-2018R1D1A1B07043049), and also by Hankuk University of Foreign Studies Research Fund.
}

\author{Louis Esperet}
\address{Laboratoire G-SCOP (CNRS, Universit\'e Grenoble-Alpes), Grenoble, France}
\email{louis.esperet@grenoble-inp.fr}
\thanks{Louis Esperet is partially supported by ANR Projects STINT
  (\textsc{anr-13-bs02-0007}) and GATO (\textsc{anr-16-ce40-0009-01}), and LabEx PERSYVAL-Lab (\textsc{anr-11-labx-0025}).}

\date{}
\sloppy

\begin{abstract}
A graph $G$ is $(d_1,\ldots,d_k)$-colorable if its vertex set can be partitioned into $k$ sets $V_1,\ldots,V_k$, such that for each $i\in\{1, \ldots, k\}$, the subgraph of $G$ induced by $V_i$ has maximum degree at most $d_i$. 
The Four Color Theorem states that every planar graph is $(0,0,0,0)$-colorable, and a classical result of Cowen, Cowen, and Woodall shows that every planar graph is $(2,2,2)$-colorable. 
In this paper, we extend both of these results to graphs on surfaces. 
Namely, we show that every graph embeddable on a surface of Euler
genus $g>0$ is $(0,0,0,9g-4)$-colorable and
$(2,2,9g-4)$-colorable. Moreover, these graphs are also $(0,0,O(\sqrt{g}),O(\sqrt{g}))$-colorable and
$(2,O(\sqrt{g}),O(\sqrt{g}))$-colorable.
We also prove that every triangle-free graph that is embeddable on a surface of Euler genus $g$ is $(0, 0, O(g))$-colorable.
This is an extension of Gr\"{o}tzsch's Theorem, which states that triangle-free planar graphs are $(0, 0, 0)$-colorable. Finally, we prove that every graph of girth at least 7 that is embeddable on a surface of Euler genus $g$ is $(0,O(\sqrt{g}))$-colorable. All these results are best possible in several ways as the girth condition is sharp, the constant maximum degrees cannot be improved, and the bounds on the maximum degrees depending on $g$ are tight up to a constant multiplicative factor.
\end{abstract}

\maketitle

\section{Introduction}

For a sequence $(d_1,d_2,\ldots,d_k)$ of $k$ integers, we say that a graph $G$ is \emph{$(d_1,d_2,\ldots,d_k)$-colorable} if each vertex of $G$ can be assigned a color from the set $\{1,2,\ldots,k\}$ in such a way that for each $i\in \{1, \ldots, k\}$, a vertex colored $i$ has at most $d_i$ neighbors colored $i$.
In other words, each color class $i$ induces a subgraph of maximum degree at most $d_i$.
Note that a proper coloring is the same as a $(0,0,\ldots,0)$-coloring.
For an integer $d$, a $(d,d,\ldots,d)$-coloring is sometimes called a \emph{$d$-improper coloring} or \emph{$d$-defective coloring}. 

The Four Color Theorem~\cite{AH77a,AH77b} states that every planar graph is $(0,0,0,0)$-colorable, and it was proved by Cowen, Cowen, and Woodall~\cite{CCW86} that every planar graph is also $(2,2,2)$-colorable. 
For any integer $k$, it is not difficult to construct a planar graph that is not $(k, k)$-colorable; one can even find such planar graphs that are triangle-free (see \cite{Skr99}).

A natural question to ask is how these results can be extended to graphs embeddable on surfaces with higher (Euler) genus. 
Cowen, Cowen, and Woodall~\cite{CCW86} proved that every graph of Euler genus $g$ is $(c_4,c_4,c_4,c_4)$-colorable with $c_4=\max \{14,\tfrac13(4g-11)\}$, and conjectured that the same should hold with three colors instead of four.
This was proved by Archdeacon~\cite{Arc87}, who showed that every graph of Euler genus $g$ is $(c_3,c_3,c_3)$-colorable with 
$c_3=\max \{15,\tfrac12(3g-8)\}$. 
The value $c_3$ was subsequently improved to $\max\{12,6+\sqrt{6g}\}$ by Cowen, Goddard, and Jesurum~\cite{CGJ97}, and eventually to $\max\{9,2+\sqrt{4g+6}\}$ by Woodall~\cite{Woo11}.

In this paper, we will show that in the original result of Cowen, Cowen, and
Woodall~\cite{CCW86}, it suffices that only one of the four color classes is not a stable set. 
Namely, we will prove that every graph that is embeddable on a surface of Euler genus $g>0$ is both $(0,0,0,9g-4)$-colorable and $(2,2,9g-4)$-colorable. 
These come as natural extensions of the fact that planar graphs are $(0,0,0,0)$-colorable and $(2,2,2)$-colorable. 
Interestingly, there is a constant $c_1>0$ such that the bound $9g-4$ in these results cannot be replaced by $c_1\cdot g$, so there is no hope to obtain a bound of the same order as $c_3$ above.
In other words, the growth rate of the bound $9g-4$ cannot be improved to a sublinear function of $g$ in both results.

However, when two color classes are allowed to have non-constant maximum degrees, we show that the bound $9g-4$ can be improved to a sublinear function of $g$ in both results. 
Namely, any graph embeddable on a surface of Euler genus $g$ is both $(0,0,K_1(g),K_1(g))$-colorable and $(2, K_2(g), K_2(g))$-colorable with $K_1(g)=20+\sqrt{48g+481}$ and $K_2(g)=38+\sqrt{84g+1682}$.
We also show that the growth rate of $K_1(g)$ and $K_2(g)$ are tight in terms of $g$. 

A famous theorem of Gr\"{o}tzsch~\cite{G59} states that every triangle-free planar graph is 3-colorable. In this paper, we prove that this can be extended to graphs embeddable on surfaces as follows: 
every triangle-free graph embeddable on a surface of Euler genus $g$ is $(0, 0, K_3(g))$-colorable where $K_3(g)=\lceil{10g+32\over 3}\rceil$. 
We prove that $K_3(g)$ cannot be replaced by a sublinear function of $g$, even for graphs of girth at least 6.
It was proved by \v{S}krekovski~\cite{Skr99} that for any $k$, there exist triangle-free planar graphs that are not $(k,k)$-colorable. 
This shows that there does not exist any 2-color analogue of our result on triangle-free graphs on surfaces.

Choi, Choi, Jeong, and Suh~\cite{CCJS14} proved that every graph of girth at least 5 embeddable on a surface of Euler genus $g$ is $(1, K_4(g))$-colorable where $K_4(g)=\max\{10,\lceil {12g+47\over 7}\rceil\}$. 
They also show that the growth rate of $L(g)$ cannot be replaced by a sublinear function of $g$.
On the other hand, for each $k$, Borodin, Ivanova, Montassier, Ochem, and Raspaud~\cite{BIMOR10} constructed a planar graph of girth 6 that is not $(0,k)$-colorable. 

Finally, we prove that every graph of girth at least 7 embeddable on a surface of Euler genus $g$ is $(0,5+\ceil{\sqrt{14g+22}})$-colorable.
On the other hand, we show that there is a constant $c_2>0$ such that for infinitely many values of $g$, there exist graphs of girth at least 7 embeddable on a surface of Euler genus $g$, with no $(0,\lfloor c_2 \sqrt{g}\rfloor)$-coloring.

\smallskip

The results of this paper together with the aforementioned results completely solve\footnote{up to a constant multiplicative factor for
  the maximum degrees $d_i$, depending on $g$.} the following problem: \emph{given
integers $\ell\le 7$, $k$, and $g$, find the smallest $k$-tuple $(d_1,\ldots,d_k)$ in
lexicographic order, such that every
graph of girth at least $\ell$ embeddable on a surface of Euler genus $g$ is $(d_1,\ldots,d_k)$-colorable.}

\section{Preliminaries}

\subsection{Graphs on surfaces}

All graphs in this paper are simple, which means without loops and multiple edges. 
In this paper, a {\em surface} is a non-null compact connected 2-manifold without boundary.  
We refer the reader to the monograph of Mohar and Thomassen~\cite{MoTh} for background on graphs on surfaces.

A surface is either orientable or non-orientable. 
The \emph{orientable surface~$\OS_h$ of genus~$h$} is obtained by adding $h\ge0$
\emph{handles} to the sphere, and the \emph{non-orientable surface~$\NOS_k$ of genus~$k$} is formed by adding $k\ge1$ \emph{cross-caps} to the sphere. 
The {\em Euler genus} $\eg(\Sigma)$ of a surface $\Sigma$ is defined as twice its genus if $\Sigma$ is orientable, and as its genus if $\Sigma$ is non-orientable. 

We say that an embedding is \emph{cellular} if every face is homeomorphic to an open disc of~$\mathbb{R}^2$. 
Euler's Formula states that if~$G$ is a graph with a cellular embedding in a surface~$\Sigma$, with vertex set~$V$, edge set~$E$, and face set~$F$, then $|V|-|E|+|F|\:=\:2-\eg(\Sigma)$.

If $f$ is a face of a graph~$G$ cellularly embedded in a surface~$\Sigma$, then a \emph{boundary walk of~$f$} is a walk consisting of vertices and edges as they are encountered when walking along the whole boundary of~$f$, starting at some vertex and following some orientation of the face. 
The \emph{degree of a face~$f$}, denoted~$d(f)$, is the number of edges on a boundary walk of~$f$.
Note that some edges may be counted more than once.

Let $G$ be a graph embedded in a surface $\Sigma$. 
A cycle $C$ of $G$ is said to be {\em non-contractible} if $C$ is non-contractible as a closed curve in $\Sigma$. 
Also, $C$ is called {\em separating} if $C$ separates $\Sigma$ in two connected pieces, otherwise $C$ is \emph{non-separating}. It is well known that only three types of non-contractible cycles exist (see~\cite{MoTh}): 2-sided separating cycles, 2-sided non-separating cycles, and 1-sided cycles (the latter only appear in non-orientable surfaces, and are non-separating).

The following fact, which is often called the \emph{3-Path  Property}, will be used: 
if $P_{1}, P_{2},P_{3}$ are three internally disjoint paths with the same endpoints in $G$, and $P_{1} \cup P_{2}$ is a non-contractible cycle, then at least one of the two cycles $P_{1} \cup P_{3}$, $P_{2} \cup P_{3}$ is also non-contractible; see for instance~\cite[Proposition 4.3.1]{MoTh}.

We will need the following simple observation about shortest non-contractible cycles. 
The proof presented here is due to Gwena\"el Joret.

\begin{obs}\label{obs:shortest}
Let $G$ be a graph embedded on some surface.
If $C$ is a shortest non-contractible cycle in $G$, then $C$ is an induced cycle and each vertex of $G$ has at most 3 neighbors in $C$.
\end{obs}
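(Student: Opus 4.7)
The plan is to apply the 3-Path Property twice: first to force $C$ to be induced, then to bound the number of neighbors of any vertex on $C$. For the induced part, I would assume $C$ has a chord $xy$, let $P_1, P_2$ be the two arcs of $C$ joining $x$ and $y$, and let $P_3$ be the edge $xy$ itself. These are three internally disjoint paths with the same endpoints, and $P_1 \cup P_2 = C$ is non-contractible; the 3-Path Property then makes one of $P_1 \cup P_3$ or $P_2 \cup P_3$ non-contractible, and each is strictly shorter than $C$, contradicting the choice of $C$.

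For the bound on neighbors, suppose some vertex $v$ has four neighbors $u_1, u_2, u_3, u_4$ on $C$, listed in cyclic order. Since $C$ is induced, $v \notin C$. Let $a_i \ge 1$ denote the length of the arc $A_i$ of $C$ from $u_i$ to $u_{i+1}$ (indices mod $4$), so $\ell := |C| = a_1 + a_2 + a_3 + a_4$. I would apply the 3-Path Property to the paths $A_1 \cup A_2$, $A_3 \cup A_4$, $u_1 v u_3$ from $u_1$ to $u_3$, and require that the resulting non-contractible cycle not be shorter than $C$: this forces $a_1 + a_2 \le 2$ or $a_3 + a_4 \le 2$. The symmetric application at the pair $(u_2, u_4)$ gives $a_2 + a_3 \le 2$ or $a_4 + a_1 \le 2$. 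Combined with $a_i \ge 1$, in every case at least three of the $a_i$ equal $1$, and after cyclic relabeling I may assume $a_1 = a_2 = a_3 = 1$. A third application of the 3-Path Property, to the paths $u_1 u_2 u_3 u_4$, $A_4$, $u_1 v u_4$ from $u_1$ to $u_4$, shows that the $5$-cycle $C' := v u_1 u_2 u_3 u_4$ is non-contractible and forces $a_4 \le 2$.

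Two small cases then remain. If $a_4 = 2$, then $|C'| = |C| = 5$, so $C'$ is itself a shortest non-contractible cycle; but $v u_2$ is a chord of $C'$, contradicting the induced-cycle part of the proof applied to $C'$. The harder case is $a_4 = 1$, where $|C| = 4$ and $v$ is adjacent to all four vertices of $C$: here the four triangles $v u_i u_{i+1}$ are contractible (length $3 < 4$), but the direct form of the 3-Path Property cannot now yield any cycle shorter than $C$. I would instead use its contrapositive form: if two of the three cycles formed from a triple of internally disjoint paths with common endpoints are contractible, then so is the third. Applied to the paths $v u_i$, $v u_{i-1} u_i$, $v u_{i+1} u_i$ from $v$ to each $u_i$, this upgrades the contractibility of the triangles to contractibility of the four $4$-cycles $v u_{i-1} u_i u_{i+1}$; applied once more to the paths $u_1 u_2 u_3$, $u_1 u_4 u_3$, $u_1 v u_3$ from $u_1$ to $u_3$, it forces $C$ itself to be contractible, a contradiction. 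The main obstacle is exactly this borderline $|C| = 4$ case, where the direct form of the 3-Path Property produces only cycles of length at least $|C|$; one must instead cascade contractibility backwards through the contrapositive, from the short triangles up to $C$.
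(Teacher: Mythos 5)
Your proof is correct, and while the first half (showing $C$ is induced) matches the paper's, your argument for the degree bound takes a genuinely different route. Fixing a vertex $v$ with $k\ge 4$ neighbors on $C$, the paper partitions $C$ into $k$ ``basic subpaths'' between consecutive neighbors and forms the corresponding ``basic cycles'' through $v$; it then reasons about the rotation system at $v$: either some basic cycle has non-consecutive rays in the cyclic order around $v$ (so it cannot bound a disk, giving a non-contractible cycle of length at most $|C|-1$), or every basic cycle bounds a disk, and gluing these disks shows $C$ itself bounds a disk, a contradiction. Your argument instead extracts everything from the 3-Path Property: two direct applications force three consecutive arcs of length one, a third bounds the remaining arc $a_4$ by $2$, and the two borderline cases are dispatched separately --- the $a_4=2$ case by applying the already-established induced-cycle statement to the second shortest non-contractible cycle $C'$, and the $|C|=4$ case via the contrapositive of the 3-Path Property, propagating contractibility upward from the triangles through the $4$-cycles $vu_{i-1}u_iu_{i+1}$ to $C$. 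Both are valid. The paper's rotation-system argument is shorter, works uniformly for all $k\ge4$, and avoids casework, at the cost of reasoning directly about the embedding; yours is longer and needs the small-cycle analysis, but treats the surface topology as a black box accessed only through the 3-Path Property. Your identification that the direct form of the 3-Path Property genuinely fails at $|C|=4$ and that only its contrapositive resolves it is a nice observation.
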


\begin{proof}
It is easy to see that if $C$ has a chord, then by the 3-Path Property, $G$ contains a non-contractible cycle shorter than $C$ (recall that $G$ is simple), a contradiction. 
This shows that $C$ is an induced cycle, and in particular, every vertex of $C$ has at most 2 neighbors in $C$.

 Assume now that some vertex $v$ not in $C$ has $k\ge 4$ neighbors in $C$ (in particular, $C$ contains at least 4 vertices). 
Each subpath of $C$ whose end points are neighbors of $v$ and whose internal vertices are not adjacent to $v$ is called a \emph{basic subpath} of $C$. 
Note that the edges of $C$ are partitioned into $k$ basic subpaths of $C$. 
Since $v$ has at least 4 neighbors in $C$, each basic subpath contains at most $|C|-3$ edges. 
A \emph{basic cycle} is obtained from a basic subpath $P$ of $C$ with endpoints $u,w$ by adding the vertex $v$ and the edges $vu$ and $vw$, which are the \emph{rays} of the basic cycle. 

The embedding of $G$ gives an order on the edges incident to $v$. 
If the rays of some basic cycle are not consecutive (among the rays of basics cycles) in the order around $v$, then this basic cycle cannot bound a region homeomorphic to an open disk, and is thus non-contractible. 
Since this basic cycle has length at most $|C|-3+2<|C|$, this contradicts the minimality of $C$.
 We can therefore assume the two rays of each basic cycle are consecutive in the order around $v$, and each basic cycle bounds a region homeomorphic to an open disk. 
By gluing these $k$ regions together, we obtain that $C$ bounds a region homeomorphic to an open disk, which contradicts the fact that $C$ is non-contractible.
\end{proof}

\subsection{Coloring Lemmas}

Let $K\ge 1$, and $k>j\ge 1$ be three integers. Let $d_1,d_2,\ldots,d_k$ be such that
$d_1=\cdots=d_j=K$ and $\max\{d_{j+1},\ldots,d_k\}<K$. In this
section, we study the properties of a graph $G_{j, k}$ that is not $(d_1, \ldots,
d_k)$-colorable, while all its induced subgraphs are $(d_1, \ldots,
d_k)$-colorable.

Let $c_1, \ldots, c_k$ be the $k$ colors of a $(d_1, \ldots, d_k)$-coloring $\varphi$ such that the maximum degree of the graph induced by the color $c_i$ is at most $d_i$ for $i\in\{1, \ldots, k\}$. 
A vertex $v$ is {\it $c_i$-saturated} if $\varphi(v)=c_i$ and $v$ has $d_i$ neighbors colored $c_i$.
Note that by definition, a $c_i$-saturated vertex has at least $d_i$ neighbors. 

For any integer $d$, a \emph{$d$-vertex} is a vertex of degree $d$, a
\emph{$d^+$-vertex} is a vertex with degree at least $d$, and a \emph{$d^-$-vertex} is a vertex with degree at most $d$ . 
The same notation applies to faces instead of vertices. 

\begin{lem}\label{lem:vx-degree}
Every $(K+k-1)^-$-vertex of $G_{j, k}$ has at least $j$ neighbors that are $(K+k)^+$-vertices. 
\end{lem}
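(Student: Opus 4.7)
The plan is to argue by the minimality of $G_{j,k}$. Suppose for contradiction that a $(K+k-1)^-$-vertex $v$ of $G_{j,k}$ has at most $j-1$ neighbors of degree $\geq K+k$, and let $H$ denote this set of heavy neighbors. By minimality, $G_{j,k}-v$ admits a $(d_1,\ldots,d_k)$-coloring $\varphi$, and the goal is to modify $\varphi$ near $v$ and extend it to $v$, contradicting the defining property of $G_{j,k}$.

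For each color $c_i$, let $n_i$ be the number of $c_i$-colored neighbors of $v$, and let $T_i$ be the set of $c_i$-saturated neighbors of $v$. Call $c_i$ \emph{usable} for $v$ exactly when $n_i\leq d_i$ and $T_i=\emptyset$; assuming no color is usable, put $i$ into a set $S$ when $n_i>d_i$, and note that otherwise $T_i\ne\emptyset$ forces $n_i\geq 1$. The inequality $\sum_i n_i=d(v)\leq K+k-1$ combined with $n_i\geq d_i+1$ on $S$ and $n_i\geq 1$ off $S$ gives $\sum_{i\in S}d_i\leq K-1$, which, since $d_i=K$ for every $i\leq j$, forces $S\subseteq\{j+1,\ldots,k\}$. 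Consequently every index $i\leq j$ yields a nonempty $T_i$; and since the $T_i$ are pairwise disjoint, $\sum_{i\leq j}|T_i\cap H|\leq|H|\leq j-1$ produces some $i_0\leq j$ with $T_{i_0}\cap H=\emptyset$, meaning every $c_{i_0}$-saturated neighbor of $v$ is a light vertex.

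It remains to free the color $c_{i_0}$ for $v$ by peeling the vertices of $T_{i_0}$ off $c_{i_0}$, one at a time. At each step, pick any current $c_{i_0}$-saturated neighbor $u$ of $v$. Note that $u$ is still light: recolorings only \emph{leave} $c_{i_0}$ and never join it, so the current saturated set stays inside the original $T_{i_0}$. Since $u$ is still $c_{i_0}$-saturated, it has exactly $K$ neighbors colored $c_{i_0}$, and hence at most $k-2$ colored neighbors outside $c_{i_0}$ once $v$ is removed from its degree count. As there are $k-1$ colors other than $c_{i_0}$, pigeonhole yields a color $c_\ell$ which appears on none of $u$'s neighbors; recoloring $u$ to $c_\ell$ is then legal, since $u$ acquires $0\leq d_\ell$ same-colored neighbors and no existing $c_\ell$-vertex gains a new same-colored neighbor.

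The iteration terminates with $T_{i_0}=\emptyset$, and $n_{i_0}$ has only decreased from its original value, so remains $\leq K=d_{i_0}$. Coloring $v$ by $c_{i_0}$ thus extends $\varphi$ to a valid $(d_1,\ldots,d_k)$-coloring of $G_{j,k}$, the desired contradiction. The step I expect to require the most care is tracking the bound ``$u$ has at most $k-2$ non-$c_{i_0}$ colored neighbors'' throughout the iteration: any neighbor of $v$ still $c_{i_0}$-saturated at the time it is processed has not yet lost a $c_{i_0}$-neighbor to earlier recolorings (otherwise it would have dropped below saturation), so the count is valid at that moment and the pigeonhole step applies as stated.
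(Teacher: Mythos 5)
Your proof is correct and takes essentially the same route as the paper: both appeal to minimality to color $G_{j,k}-v$, pigeonhole over the at most $j-1$ heavy neighbors of $v$ to find a suitable heavy color $c_{i_0}$, and then peel the $c_{i_0}$-saturated (necessarily light) neighbors of $v$ off that color one at a time via the same local recoloring argument (a saturated light vertex, with the uncolored $v$ among its neighbors, misses some color by pigeonhole). The only substantive difference is that you replace the paper's initial ``recolor each neighbor of $v$ into a light class if possible'' step and its consequence that all $k$ colors appear around $v$ by a direct count on the $n_i$, which more cleanly establishes that some heavy color $c_{i_0}$ already has $n_{i_0}\le d_{i_0}$ and no heavy $c_{i_0}$-saturated neighbor.
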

\begin{proof}
Assume for the sake of contradiction that a $(K+k-1)^-$-vertex $v$ has at most $j-1$ neighbors that are $(K+k)^+$-vertices. 
By hypothesis, $G_{j, k}-v$ has a $(d_1, \ldots, d_k)$-coloring
$\varphi$. Observe that the colors $c_1, \ldots, c_k$ must all appear in the neighborhood of $v$, since otherwise we could extend $\varphi$ to $G_{j, k}$ by coloring $v$ with the missing color from $c_1, \ldots, c_k$.
Since $v$ is adjacent to at most $j-1$ vertices of degree at least
$K+k$, there exists a color $c_\ell$ with $\ell\in\{1, \ldots, j\}$
such that no neighbor of $v$ that is a $(K+k)^+$-vertex is colored
with $c_\ell$. Assume that $v$ has a $c_\ell$-saturated neighbor
$u$. Then $u$ has degree at most $K+k-1$, and thus has at most $K+k-2$
neighbors distinct from $v$. Among these neighbors, color $c_\ell$ has
to appear $K$ times, so there exists a color distinct from $c_\ell$
(there are $k-1$ such colors) that does not appear in the neighborhood of $u$.
Therefore, we can extend the coloring $\varphi$ to all of $G_{j, k}$
by recoloring all $c_\ell$-saturated neighbors of $v$ with colors
distinct from $c_\ell$ and then letting $\varphi(v)=c_\ell$. 
We obtained a $(d_1, \ldots, d_k)$-coloring of $G_{j, k}$, which is a contradiction.
\end{proof}

\begin{lem}\label{lem:num-high}
There are at least $1+\sum_{i=2}^{k}(d_i+1)$ vertices in $G_{j, k}$ that are $(K+k)^+$-vertices. 
\end{lem}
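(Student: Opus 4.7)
The plan is to extract a single $(K+k)^+$-vertex $v$ (which will account for the ``$+1$'' in the bound) and then exhibit $\sum_{i=2}^{k}(d_i+1)$ further $(K+k)^+$-vertices among the neighbors of $v$, partitioned according to color. The first step, producing the vertex $v$ itself, is immediate from Lemma~\ref{lem:vx-degree}: if every vertex of $G_{j,k}$ had degree at most $K+k-1$, applying Lemma~\ref{lem:vx-degree} to any vertex would force it to have at least $j\geq 1$ neighbor of degree at least $K+k$, which is absurd. So fix some $(K+k)^+$-vertex $v$.

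Next, by the minimality of $G_{j,k}$ the subgraph $G_{j,k}-v$ admits a $(d_1,\ldots,d_k)$-coloring $\varphi$. I would then mimic the absorption step in the proof of Lemma~\ref{lem:vx-degree}: for every neighbor $u$ of $v$ with $\deg(u)<K+k$, recolor $u$ with a color in $\{c_{j+1},\ldots,c_k\}$ whenever such a recoloring is valid, producing a new coloring $\varphi'$ of $G_{j,k}-v$. The point of this step is that after absorption, any ``low-degree'' neighbor of $v$ has been pushed into one of the small-$d_i$ color classes, so that a $c_i$-colored neighbor for $i\in\{1,\ldots,j\}$ must be a $(K+k)^+$-vertex (or resist absorption in a controlled way that we exploit below).

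Since $\varphi'$ still does not extend to $v$, each color $c_i$ is blocked: $v$ has either at least $d_i+1$ neighbors colored $c_i$ or at least one $c_i$-saturated neighbor. The heart of the proof is to show that for every $i\in\{2,\ldots,k\}$, one can in fact find $d_i+1$ neighbors of $v$ colored $c_i$ that are all $(K+k)^+$-vertices. Since neighbors of different colors are automatically disjoint, summing over $i\in\{2,\ldots,k\}$ would produce $\sum_{i=2}^{k}(d_i+1)$ $(K+k)^+$-vertices distinct from $v$, finishing the proof.

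The delicate point, and the main obstacle, is precisely this final claim: ruling out low-degree $c_i$-blockers. For $i\in\{2,\ldots,j\}$ one has $d_i=K$, and the argument of Lemma~\ref{lem:vx-degree} applies almost verbatim — a low-degree $c_i$-saturated neighbor has at most $k-1$ non-$c_i$ neighbors, so some color is missing in its neighborhood and it can be recolored, unlocking $c_i$ at $v$ and giving a contradiction. The case $i\in\{j+1,\ldots,k\}$ is more subtle because $d_i<K$ leaves less slack when counting non-$c_i$ neighbors of a saturated vertex; here I would iterate the recoloring, using both the post-absorption structure (low-degree neighbors of $v$ already live in $\{c_{j+1},\ldots,c_k\}$) and the fact that $v$ itself has degree at least $K+k$, to show that any remaining blocker of $c_i$ at $v$ must have degree at least $K+k$. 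Carrying out this iteration while controlling the side effects (each recoloring step modifying $v$'s neighborhood) is the real work of the proof.
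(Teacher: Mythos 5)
Your plan is genuinely different from the paper's and, as you yourself flag, it is not actually completed: the claim that you can always find $d_i+1$ neighbors of $v$ colored $c_i$ that are all $(K+k)^+$-vertices is exactly the part that is missing, and I do not see how to establish it. The recoloring trick of Lemma~\ref{lem:vx-degree} relies on the saturated vertex $u$ having degree $<K+k$ while being $c_i$-saturated with $d_i=K$, so $u$ has at most $k-1$ non-$c_i$ neighbors and hence misses some color. For $i\in\{j+1,\ldots,k\}$ you have $d_i<K$, so a low-degree $c_i$-saturated vertex can have up to $K+k-1-d_i > k-1$ non-$c_i$ neighbors and may see every color; nothing forces it to be recolorable. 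Worse, $c_i$ can be blocked at $v$ by $d_i+1$ low-degree, non-saturated $c_i$-neighbors (assigning $c_i$ to $v$ would then give $v$ too many $c_i$-neighbors), and in that situation there is no saturated vertex to attack and no reason for any of those $d_i+1$ blockers to be high-degree. So the "iterate the recoloring" step is not a routine extension of Lemma~\ref{lem:vx-degree}; it is the whole problem, and a local argument rooted at a single vertex $v$ seems structurally unable to control it.

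The paper avoids this entirely with a global construction: assume there are at most $\sum_{i=2}^{k}(d_i+1)$ high vertices, partition them into sets $S_2,\ldots,S_k$ of sizes at most $d_2+1,\ldots,d_k+1$, color $S_i$ entirely with $c_i$ (this is automatically a legal partial $(d_1,\ldots,d_k)$-coloring because each $S_i$ is small), and then greedily extend to the low-degree vertices, giving color $c_1$ only when all of $c_2,\ldots,c_k$ already appear in the neighborhood. A low vertex colored $c_1$ then has degree at most $K+k-1$ with $k-1$ neighbors colored $c_2,\ldots,c_k$, hence at most $K=d_1$ neighbors colored $c_1$; this yields a valid coloring of $G_{j,k}$, a contradiction. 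I would recommend switching to that strategy, or at least being explicit that your proof as written has an unresolved gap at the step you call ``the real work.''
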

\begin{proof}
Let $H$ be the set of $(K+k)^+$-vertices of $G_{j, k}$, and assume for the sake of contradiction that $|H|\leq \sum_{i=2}^{k}(d_i+1)$. 
Partition $H$ into $k-1$ sets $S_2, \ldots, S_k$ such that $|S_i|\leq d_i+1$ for $i\in\{2, \ldots, k\}$.

Let $\varphi$ be a coloring of the vertices of $H$ obtained by
assigning color $c_i$ to each vertex of $S_i$, for each $i\in\{2, \ldots, k\}$. 
Since each $S_i$ contains at most $d_i+1$ vertices, the maximum degree of
the graph induced by $S_i$ cannot be more than $d_i$, so $\varphi$ is
indeed a $(d_1, \ldots, d_k)$-coloring of the subgraph of $G_{j,k}$
induced by $H$. 
We now extend $\varphi$ to a $(d_1, \ldots, d_k)$-coloring $\varphi'$
of $G_{j, k}$ in the following greedy fashion: consider a fixed
ordering of the vertices in $V(G_{j, k})-H$ and for each vertex $v$ in
this ordering, we do the following: if the neighborhood of $v$ does
not contain some color $c_i$ with $i\ge 2$ then we assign $c_i$ to
$v$, and otherwise we assign $c_1$ to $v$.

To verify that $\varphi'$ is a $(d_1, \ldots, d_k)$-coloring of $G_{j, k}$, we only need to check that the vertices colored with $c_1$ induce a graph of maximum degree at most $d_1+1$. 
Since no vertex in $H$ is colored with $c_1$ we know that every vertex $v$ colored with $c_1$ has degree at most $K+k-1$. 
Also, $v$ must have neighbors colored with $c_2, \ldots, c_k$ by the greedy algorithm. 
Now, $v$ cannot have $K+1$ neighbors colored with $c_1$ since it has degree at most $K+k-1$.
This shows that $\varphi'$ is a $(d_1, \ldots, d_k)$-coloring of $G_{j, k}$, which is a contradiction.
\end{proof}

\subsection{Discharging procedure}\label{subsec:discharging}

When an embedding of a counterexample $G$ is fixed, we can let $F(G)$ denote the set of faces of this embedding. 
We will prove that $G$ cannot exist by assigning an \emph{initial charge} $\mu(z)$ to each $z\in V(G) \cup F(G)$, and then applying a {\it discharging procedure} to end up with {\it final charge} $\mu^*(z)$ at $z$.
The discharging procedure will preserve the sum of the initial charge, yet, we will prove that the final charge sum is greater than the initial charge sum, and hence we find a contradiction to conclude that the counterexample $G$ does not exist.

%
%

\section{Graphs on Surfaces}\label{sec:main}

\subsection{One part with large maximum degree}

Given a connected subgraph $H$ of a graph $G$, let $G/H$ denote the
graph obtained from $G$ by contracting the edges of $H$ into a single
vertex (and deleting loops and multiple edges in the resulting graph).

The proof of the next result uses a technique that is similar to a tool introduced in~\cite{KT12}, yet our presentation is quite different.

\begin{thm}\label{thm:deg}
For every $g\ge 0$, every connected graph $G$ of Euler genus $g$,
  and every vertex $v$ of $G$, the graph $G$ has a connected subgraph $H$ containing $v$, such that $G/H$ is planar and every vertex of $G$ has at most
$\max\{9g-4,1\}$ neighbors in $V(H)$. 
\end{thm}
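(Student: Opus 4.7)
The plan is to prove the result by induction on the Euler genus $g$, with the base case $g = 0$ (planar) taking $H = \{v\}$ so that each vertex has at most one neighbor in $H$. For the inductive step with $g \ge 1$, the construction proceeds in two stages: first build a small initial connected piece $H_0$ containing $v$ whose contraction strictly reduces the Euler genus, then lift a subgraph obtained from the induction hypothesis.

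For the first stage, I would choose a shortest non-contractible cycle $C$ (which exists since $g > 0$); by Observation~\ref{obs:shortest}, $C$ is induced and every vertex of $G$ has at most $3$ neighbors on $C$. I would then take a shortest path $P = p_0 p_1 \cdots p_\ell$ from $p_0 = v$ to the nearest vertex $p_\ell \in V(C)$. Since $P$ is a shortest path, $d_G(v,p_i)=i$ for every $i$; combined with $|d_G(v,u) - d_G(v,p_i)| \le 1$ for any neighbor $p_i$ of $u$, this implies every vertex of $G$ has at most $3$ neighbors on $P$. Setting $H_0 = V(P) \cup V(C)$ gives a connected subgraph containing $v$. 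The overlap at $p_\ell$, combined with a short case analysis on $d_G(v,u)$, then yields that every $u \in V(G) \setminus V(H_0)$ has at most $5$ neighbors in $H_0$; the key point is that if $u$ has any neighbor on $C$ then $d_G(v,u) \ge \ell - 1$, which prevents three non-$p_\ell$ positions on $P$ from all being neighbors of $u$.

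For the second stage, I would argue that $\eg(G/H_0) \le g - 1$ and apply induction. If $C$ is non-separating (either $1$-sided or $2$-sided non-separating), contracting it strictly reduces the Euler genus. If $C$ is $2$-sided separating, then both sides of $\Sigma \setminus C$ have positive Euler genus summing to $g$, so I would enlarge $H_0$ to absorb the entire side $G_2$ not containing $v$. The enlarged $H_0$ is still connected and contains $v$; since $C$ separates $G$, vertices on the $v$-side have no neighbors in $G_2 \setminus V(C)$, so the bound of $5$ neighbors in $H_0$ is preserved; and $G/H_0$ now embeds on the $v$-side, of Euler genus $\le g - 1$.

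Applying the induction hypothesis to $G/H_0$ at the contracted vertex $v^*$ yields a connected subgraph $H' \ni v^*$ such that $(G/H_0)/H'$ is planar and each vertex of $G/H_0$ has at most $9(g-1) - 4 = 9g - 13$ neighbors in $H'$ (in the base case $g - 1 = 0$, we have $H' = \{v^*\}$, contributing zero further neighbors). Lifting, set $H := H_0 \cup (H' \setminus \{v^*\})$; this is connected, contains $v$, and $G/H$ is planar. The neighbor count for any $u \notin V(H)$ is at most $5 + (9g - 13) = 9g - 8 \le 9g - 4$ for $g \ge 2$, and at most $5 = 9 \cdot 1 - 4$ in the base case $g = 1$. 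The main obstacle in the plan is the topological step ensuring $\eg(G/H_0) < g$; in particular, handling the $2$-sided separating case by absorbing one side of the cut into $H_0$ without increasing the $5$-neighbor bound is the crux.
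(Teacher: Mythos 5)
Your high-level plan (induction on $g$, cut along a shortest non-contractible cycle, contract a small connected piece, lift) is the same as the paper's, and your ``$5$ neighbors in $H_0$'' computation is correct. However, two of the topological steps that you yourself flag as ``the crux'' do not go through as stated, and both are precisely the places where the paper's proof does something different.

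\textbf{The $2$-sided separating case.} You enlarge $H_0$ to $V(P)\cup V(C)\cup V(G_2)$, and you verify that vertices on the $v$-side still have at most $5$ neighbors in $H_0$. But the theorem requires the $9g-4$ bound for \emph{every} vertex of $G$, including the vertices you just swallowed: a vertex $u\in V(G_2)\setminus V(C)$ has all of its $G$-neighbors inside $V(G_2)\cup V(C)\subseteq H$, so it has $\deg_G(u)$ neighbors in $H$, which is unbounded. Absorbing one side wholesale cannot work. The paper instead applies induction \emph{separately} to each side: on the $v$-side it uses a two-vertex version of the induction (a subgraph containing both $v$ and the contracted face-vertex $v_1$), on the far side it uses the plain induction to get a small $H_2^*$, and the final $H$ is $H_1^*\cup C\cup H_2^*$ -- not all of $G_2$.

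\textbf{The $2$-sided non-separating case.} You assert that contracting a non-separating cycle strictly reduces Euler genus, and hence that $\eg(G/H_0)\le g-1$. This is not true in general for a $2$-sided non-separating $C$. Cutting along $C$ yields a graph $G'$ on a surface of Euler genus $g'\le g-2$ in which $C$ becomes two disjoint face boundaries $C_1$ and $C_2$; contracting a connected subgraph of $G'$ keeps you on that surface, but your set $V(P)\cup V(C)$ corresponds in $G'$ to $V(P)\cup V(C_1)\cup V(C_2)$, which in general is \emph{not connected} in $G'$ ($C$ is induced, so there are no $C_1$--$C_2$ edges, and $P$ attaches to only one side). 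Contracting a disconnected set and then identifying the two resulting vertices can cost you an extra handle, so you cannot conclude $\eg(G/H_0)<g$. The paper fixes exactly this by choosing, in the cut graph $G'$, an additional shortest path $P'$ between $f_1$ and $f_2$ and contracting $C_1\cup P'\cup C_2$ together, which is connected in $G'$ and therefore embeds after contraction on the lower-genus surface. Your path $P$ from $v$ to $C$ does not play this role.

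So the missing ingredients are: (i) a two-vertex (``refined'') version of the inductive statement so you can induct on both pieces of a separating cut without absorbing one side; and (ii) a connecting path between the two copies of $C$ inside the cut graph in the $2$-sided non-separating case. Once those are in place, your $5$-neighbor accounting for $H_0$ is fine and in fact tighter than what the paper needs.
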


\begin{proof}
We will prove the theorem by induction on $g\ge 0$. If $g=0$, then $G$ is
planar and the result directly follows by taking $H$ as the subgraph
of $G$ induced by $\{v\}$. In the
remainder, we may thus assume that $g>0$.

Let $G'$ be a connected graph of Euler genus $g'$ with $0\le g' <g$, and let $P$ be a shortest path between two vertices $u$ and $w$ of $G'$. 
Since $P$ is a shortest path, each vertex of $G'$ has at most 3 neighbors in $V(P)$.
Note that the graph $G^*=G'/P$, which is the graph obtained from $G'$ by contracting $P$ into a single vertex $v^*$, has Euler genus at most $g'$. 
If $g'=0$, then both $G'$ and $G^*$ are planar.
If $g'>0$, then by the induction hypothesis, $G^*$ has a connected subgraph $H^*$ containing $v^*$, such that $G^*/H^*$ is planar and every vertex of $G^*$ has at most $9g'-4$ neighbors in $V(H^*)$.
Let $H'$ be the subgraph of $G'$ induced by the vertices of $H^*-v^*$ and $P$. 
Since $H^*$ contains $v^*$, we know that $H'$ is connected, and thus $G'/H'$ is well-defined.
Note that $G'/H'$ is planar. 
For a vertex $x$ of $G'$, if $x$ is on $P$, then $x$ has at most two neighbors in $V(P)$ and at most $9g'-4$ neighbors in $V(H^*)$, and therefore $x$ has at most $9g'-2$ neighbors in $V(H')$. 
Otherwise $x \not\in P$, and $x$ has at most three neighbors in $V(P)$ and at most $9g'-4$ neighbors in $V(H^*)$ (including $v^*$ if $x$ has a neighbor in $P$), and therefore $x$ has at most $9g'-2$ neighbors in $V(H')$.

We proved that for any $0\le g' <g$, any connected graph $G'$ of Euler genus $g'$, and any pair $u,w$ of vertices of $G'$, there is a connected subgraph $H'$ of $G'$ containing $u$ and $w$ such that $G'/H'$ is planar and every vertex of $G'$ has at most $\max\{3,9g'-2\}$ neighbors in $V(H')$.
This shall be used repeatedly in the remainder of the proof and we sometimes call it the \emph{refined induction}.

\smallskip

Given a graph $G$ with positive Euler genus $g$ and a specified vertex
$v$, let $C$ be a shortest non-contractible cycle in some minimum
Euler genus embedding of $G$.
Such a cycle exists, since otherwise $G$ would be embeddable in the plane and we would have $g=0$.

Assume first that  $C$ is a 2-sided separating cycle. By cutting along $C$ (as  described in~\cite[Section 4.2]{MoTh}, for example), we obtain two graphs $G_1$ and $G_2$ embedded in two surfaces $\Sigma_1$ and $\Sigma_2$ of Euler genus $g_1>0$ and $g_2>0$, respectively, such that $g=g_1+g_2$. 
By symmetry, we can assume that $v$ 
lies in $G_1$. 
Note that $C$ corresponds to a face $f_1$ and a face $f_2$ in $G_1$ and $G_2$, respectively. For $i=1,2$, let $G_i^*$ be the graph obtained from $G_i$ by contracting all the vertices incident with $f_i$ into a single vertex $v_i$. 
Note that $G_1^*$ and $G_2^*$ are embeddable on surfaces of Euler genus $g_1$ and $g_2$, respectively. 

By the refined induction hypothesis, there is a connected subgraph $H_1^*$ of $G_1^*$ containing $v$ and $v_1$, such that $G_1^*/H_1^*$ is planar, and every vertex of $G_1^*$ has at most $9g_1-2$ neighbors in $V(H_1^*)$. 
By the induction hypothesis, there is also a connected subgraph $H_2^*$ of $G_2^*$ containing $v_2$, such that $G_2^*/H_2^*$ is planar, and every vertex of $G_2^*$ has at most $9g_2-4$ neighbors in $V(H_2^*)$. 
Let $H$ be subgraph of $G$ induced by the vertices of $H_1^*-\{v_1\}$, $C$, and $H_2^*-\{v_2\}$. 
We know that $H$ is connected and contains $v$. 
Moreover,  $G/H$ is also planar since it is obtained by identifying $v_1$ and $v_2$ from the two planar graphs $G_1^*/H_1^*$ and $G_2^*/H_2^*$.
Since $C$ is a shortest non-contractible cycle, it follows from Observation~\ref{obs:shortest} that $C$ is an induced cycle and each vertex not in $C$ has at most three neighbors in $C$. 
As a consequence, each vertex of $C$ has at most $(9g_1-2)+(9g_2-4)+2=9g-4$ neighbors in $V(H)$, and each vertex not in $C$ has at most $\max\{(9g_1-2)+2,(9g_2-4)+2\}\le 9g-9$ neighbors in $V(H)$. 
Thus, we have obtained a connected subgraph $H$ containing $v$ such that $G/H$ is planar and every vertex of $G$ has at most $9g-4$ neighbors in $V(H)$, as desired.

Assume now that $C$ is a 1-sided cycle. 
By cutting along $C$ we obtain a graph $G'$ embedded in a surface $\Sigma'$ of Euler genus
$g'\in\{0, \ldots, g-1\}$ in which $C$ corresponds to a face $f$. 
Contract all the vertices incident with $f$ into a single vertex $v^*$, and note that the resulting graph $G^*$ can also be embedded in $\Sigma'$. 
By the refined induction hypothesis, $G^*$ has a connected subgraph $H^*$ containing $v$ and $v^*$ such that $G^*/H^*$ is planar and every vertex of $G^*$ has at most $\max\{3,9g'-2\}\le 9g'+3\leq 9g-6$ neighbors in $V(H^*)$. 
Using the same argument  as above, the subgraph $H$ of $G$ induced by
the vertices of $H^*-\{v^*\}$ and $C$ is  connected, $G/H$ is planar, and every vertex of $G$ has at most $9g-4$  neighbors in $V(H)$.

It remains to consider the case when $C$ is a 2-sided non-separating cycle. 
In this case, cutting along $C$ yields a graph $G'$ embeddable on a surface $\Sigma'$ of Euler genus $g'\le g-2$, in which $C$ corresponds to two faces $f_1$ and $f_2$ lying in the same connected component. 
We take a shortest path $P$ between $f_1$ and $f_2$ in $G'$, and then 
contract all the vertices of $P$ and vertices incident with $f_1$ or $f_2$ into a single vertex $v^*$. 
Note that the resulting graph $G^*$ is embeddable on $\Sigma'$. 
By the refined induction, $G^*$ has a connected subgraph $H^*$ containing $v$ and $v^*$ such that $G^*/H^*$ is planar and every vertex of $G^*$ has at most $\max\{3,9g'-2\}\le 9g'+3$ neighbors in $V(H^*)$. 
Let $H$ be the subgraph of $G$ induced by the vertices of $C$, $P$, and $H^*-\{v^*\}$. 
Since $H$ is connected, $G/H$ is well-defined and is therefore planar. 
Let $u$ be a vertex of $G$ not in $C\cup P$. 
Since $C$ is a shortest non-contractible cycle in $G$, by Observation~\ref{obs:shortest} the vertex $u$ has at most 3 neighbors in $C$. 
Since $P$ is a shortest path in $G'$, $v$ has at most 3 neighbors in $V(P)$ and therefore $v$ has at most 6 neighbors in $C\cup P$. 
It follows that $u$ has at most $(9g'+3)+6-1=9g'+8\le 9g-10$ neighbors in $V(H)$. 
By Observation~\ref{obs:shortest}, a vertex $u$ of $C\cup P$ has at most $3+2=5$ neighbors in $C\cup P$. 
It follows that $u$ has at most $(9g'+3)+5\le 9g-10$ neighbors in $V(H)$. 
Consequently, $H$ is a connected subgraph containing $v$ such that $G/H$ is planar and each
vertex of $G$ has at most $9g-4$ neighbors in $V(H)$, as desired.
\end{proof}

We are now able to obtain the two following results as simple consequences of Theorem~\ref{thm:deg}.

\begin{thm}\label{thm:000k}
For each $g> 0$, every graph of Euler genus $g$ has a $(0,0,0,9g-4)$-coloring.
\end{thm}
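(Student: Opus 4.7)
The plan is to use Theorem~\ref{thm:deg} to reduce the statement to the Four Color Theorem. First I would reduce to the connected case: if $G$ is disconnected, color each connected component independently (planar components need only four colors by the Four Color Theorem, so a $(0,0,0,0)$-coloring suffices there; for non-planar components the argument below applies). So assume $G$ is connected and of Euler genus $g>0$.

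Pick any vertex $v \in V(G)$ and apply Theorem~\ref{thm:deg} to obtain a connected subgraph $H$ containing $v$ such that $G/H$ is planar and every vertex of $G$ has at most $9g-4$ neighbors in $H$. Let $v_H$ denote the vertex of $G/H$ arising from contracting $H$. Next I would apply the Four Color Theorem to $G/H$ to get a proper coloring with colors $\{1,2,3,4\}$, and, after permuting colors if necessary, assume that $v_H$ receives color $4$. I then define a coloring $\varphi$ of $G$ by assigning to each vertex outside $H$ the color it receives in $G/H$, and assigning color $4$ to every vertex of $H$.

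The verification that $\varphi$ is a $(0,0,0,9g-4)$-coloring is the main content of the proof, and it breaks into two simple checks. For $i \in \{1,2,3\}$: two adjacent vertices of $G$ both colored $i$ must both lie outside $H$ (since $H$-vertices have color $4$), but then they are also adjacent in $G/H$, contradicting properness of the coloring of $G/H$. For color $4$: if $u$ has color $4$ and $u \notin H$, then $u$ cannot have any neighbor in $H$, for otherwise $u$ would be adjacent to $v_H$ in $G/H$ and both would be colored $4$; hence all of $u$'s color-$4$ neighbors in $G$ lie outside $H$ and correspond to color-$4$ neighbors of $u$ in $G/H$, of which there are none. If instead $u \in H$, the same argument shows that all color-$4$ neighbors of $u$ in $G$ lie inside $H$, and there are at most $9g-4$ of them by Theorem~\ref{thm:deg}.

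The hard work is entirely packed into Theorem~\ref{thm:deg}; given that theorem and the Four Color Theorem, the only subtle point is the renaming step that forces $v_H$ to receive color $4$, which is exactly what singles out color $4$ as the sole class with nontrivial defect.
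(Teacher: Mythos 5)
Your proposal is correct and follows essentially the same route as the paper: apply Theorem~\ref{thm:deg} to obtain the connected subgraph $H$, four-color $G/H$, permute so the contraction vertex gets color~$4$, and assign color~$4$ to all of $H$. The only difference is that you spell out the verification of each color class in more detail than the paper does, but the underlying argument is identical.
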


\begin{proof}
Let $G$ be a graph of Euler genus $g>0$. 
We may assume that $G$ is connected, since we can color each connected component independently (and each of its connected components has Euler genus at most $g$). 
By Theorem~\ref{thm:deg}, $G$ has a connected subgraph $H$ such that $G/H$ is planar and every vertex of $G$ has at most $9g-4$ neighbors in $V(H)$. 
By the Four Color Theorem, $G/H$ has a proper 4-coloring. 
Assume without loss of generality that the vertex $v$ of $G/H$ resulting from the contraction of $H$ has the fourth color. 
We extend this coloring to $G$ by assigning the fourth color to all vertices of $H$. 
Since each vertex of $H$ has at most $9g-4$ neighbors in $V(H)$, and each neighbor of a vertex of $H$ outside $H$ does not have the fourth color, the obtained coloring is indeed a $(0,0,0,9g-4)$-coloring of $G$, as desired.
\end{proof}

\begin{thm}\label{thm:22k}
For each $g> 0$, every graph of Euler genus $g$ has a $(2,2,9g-4)$-coloring.
\end{thm}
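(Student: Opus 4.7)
The plan is to follow the template of Theorem~\ref{thm:000k}, with two key modifications: the $(2,2,2)$-coloring theorem of Cowen, Cowen, and Woodall (every planar graph has a $(2,2,2)$-coloring) plays the role of the Four Color Theorem, and Theorem~\ref{thm:deg} is used in a slightly strengthened ``rootless'' form.

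Specifically, I would first prove a rootless variant of Theorem~\ref{thm:deg}: for every $g>0$, every connected graph $G$ of Euler genus $g$ has a connected subgraph $H$ such that $G/H$ is planar and every vertex of $G$ has at most $9g-6$ neighbors in $H$. The proof mirrors that of Theorem~\ref{thm:deg}; since we no longer require $H$ to contain a specified target vertex, the refined induction (which gives bound $9g_1-2$) can be replaced by the plain induction (which gives the better bound $9g_1-4$) in the recursive step. In the 2-sided separating case, a vertex of the cycle $C$ then has at most $(9g_1-4)+(9g_2-4)+2 = 9g-6$ neighbors in $H$, and the 1-sided and 2-sided non-separating cases give even smaller bounds.

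Given this rootless variant, the proof of Theorem~\ref{thm:22k} parallels that of Theorem~\ref{thm:000k}. Apply the rootless variant to obtain $H$ with $G/H$ planar and every vertex of $G$ having at most $9g-6$ neighbors in $H$. Apply Cowen--Cowen--Woodall to obtain a $(2,2,2)$-coloring $\varphi$ of $G/H$, and after permuting colors assume the contracted vertex $v$ has color~$3$. Extend $\varphi$ to a coloring $\varphi'$ of $G$ by assigning color~$3$ to every vertex of $H$.

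Color classes $1$ and $2$ are inherited from $\varphi$, so each induces a subgraph of maximum degree at most~$2$. For any vertex $u$ colored~$3$, its color-$3$ degree in $G$ is at most (neighbors in $H$, all colored $3$) plus (color-$3$ neighbors outside~$H$). The first summand is at most $9g-6$ by the rootless variant; the second is at most~$2$, since any color-$3$ neighbor of $u$ outside~$H$ is adjacent to $v$ in $G/H$ and colored~$3$, and $v$ has at most~$2$ such neighbors in $G/H$. Hence the color-$3$ degree is at most $9g-4$, giving a valid $(2,2,9g-4)$-coloring. The main technical effort is establishing the rootless variant of Theorem~\ref{thm:deg}; the rest is a direct adaptation of the proof of Theorem~\ref{thm:000k}.
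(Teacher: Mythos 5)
Your proof is correct, but it takes a genuinely different route from the paper. The paper applies Theorem~\ref{thm:deg} as stated (bound $9g-4$ for a subgraph $H$ containing a specified root) and then invokes a \emph{strengthened} form of the Cowen--Cowen--Woodall result: every planar graph has a $(2,2,2)$-coloring in which a prescribed vertex has no neighbor of its own color. With that strengthening, giving the third color to all of $H$ produces no color-$3$ neighbors outside $H$, so the bound $9g-4$ is attained directly. You instead use only the plain CCW statement, which leaves up to $2$ color-$3$ neighbors of the contracted vertex, and compensate by proving a ``rootless'' version of Theorem~\ref{thm:deg} with the tighter bound $9g-6$; that slack of $2$ then absorbs the extra color-$3$ neighbors exactly. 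Your rootless variant is indeed obtainable: in the $2$-sided separating case, since $H$ need not contain the global root $v$, one can apply the already-proved (rooted) Theorem~\ref{thm:deg} to $G_1^*$ at $v_1$ with bound $9g_1-4$ rather than the two-endpoint refined induction with bound $9g_1-2$, giving $(9g_1-4)+(9g_2-4)+2=9g-6$ for vertices on $C$, and the $1$-sided and $2$-sided non-separating cases similarly improve by $2$. The paper's route is shorter given the cited CCW strengthening; yours is more self-contained and parallels the proof of Theorem~\ref{thm:000k} more literally, at the cost of re-deriving the improved degree bound for $H$.

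One small caution on the write-up of the extension step: when verifying the color-$3$ degree of a vertex $u$ colored $3$ that lies \emph{outside} $H$, you should note that if $u$ has neighbors in $H$ then $v$ is already one of $u$'s (at most two) color-$3$ neighbors in $G/H$, so $u$ has at most one further color-$3$ neighbor outside $H$, and at most $9g-6$ neighbors in $H$ by the rootless bound applied to $u$; the total $9g-5\le 9g-4$ still works, but the count is slightly different from the in-$H$ case as you wrote it.
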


\begin{proof}
Let $G$ be a graph of Euler genus $g>0$. 
As before, we may assume that $G$ is connected, since we can color each connected component independently. 
By Theorem~\ref{thm:deg}, $G$ has a connected subgraph $H$ such that $G/H$ is planar and every vertex of $G$ has at most $9g-4$ neighbors in $V(H)$. 
Cowen, Cowen, and Woodall~\cite{CCW86} proved that for every planar graph $G'$ and any specified vertex $v'$ in $G'$, the graph $G'$ has a $(2,2,2)$-coloring in which $v'$ has no neighbor of its color. 
It follows that $G/H$ has a $(2,2,2)$-coloring in which the vertex $v$ of $G/H$ resulting from the contraction of $H$ has no neighbor of its color; 
without loss of generality assume that $v$ has the third color. 
We extend this coloring to $G$ by assigning the third color to all vertices of $H$. 
Since each vertex of $H$ has at most $9g-4$ neighbors in $V(H)$, and each neighbor of a vertex of $H$ outside $H$ does not have the third color, the obtained coloring is indeed a $(2,2,9g-4)$-coloring of $G$, as desired.
\end{proof}

We now prove that Theorems~\ref{thm:000k} and~\ref{thm:22k} are best
possible, up to the multiplicative constant 9. More precisely, we will
show that $9g-4$ cannot be replaced by a sublinear function of $g$ in
Theorems~\ref{thm:000k} and~\ref{thm:22k}.

Given a graph $H$ and an integer $k$, we construct the graph $S(H,k)$ as
follows. We start with a copy of $H$, which we call the \emph{basic
  copy} of $H$. 
For each vertex $v$ in the basic copy, we add $k$ new pairwise
disjoint copies of $H$, and add all possible edges between $v$ and
these $k$ copies of $H$.

Consider the graph $G_1=S(K_4,k+1)$, for some integer $k$. Note that the
blocks of $G_1$ consist of one copy
of $K_4$ and $4(k+1)$ copies of $K_5$. Since the Euler genus of a
graph is the sum of the Euler genera of its blocks (see for instance
Theorem 4.4.3 in~\cite{MoTh}), $G_1$ has Euler genus $4(k+1)$.

Assume for the sake of contradiction that $G_1$ has a
$(0,0,0,k)$-coloring, say with colors $1,2,3,4$, where the fourth
color induces a graph with maximum degree at most $k$. Since colors
$1,2,3$ induce stable sets, at least one of the vertices of the basic
copy of $K_4$, call it $v$, is colored 4. Since $v$ has at most $k$
neighbors colored 4, at least one of the copies of $K_4$ joined to $v$
has no vertex colored 4, and hence is properly colored with $1,2,3$, which is a contradiction. 
It follows that $G_1$ is a graph of Euler genus $g$
with no $(0,0,0,\tfrac{g}{4}-1)$-coloring (and such a
graph can be constructed for infinitely many values of $g$).

\smallskip

We now consider $G_2=S(K_7,k+1)$, for some integer $k$. The blocks of
the graph
$G_2$ consist of one copy of $K_7$ (of Euler genus 2) and $7(k+1)$
copies of $K_8$ (each of Euler genus 4). Therefore, $G_2$ has Euler
genus $28k+30$. If $G_2$ admits some $(2,2,k)$-coloring with colors $1,2,3$,
where colors $1,2$ induce a graph with maximum degree 2 and color 3
induces a graph with maximum degree $k$, then some vertex $v$ of the
basic copy of $K_7$ in $G_2$ has color $3$. As before, $v$ has to be
joined to a copy of $K_7$ in which all the vertices have color 1 or 2,
which is a contradiction. Therefore, we found, for infinitely many values of $g$, a graph with Euler genus
$g$ with no $(2,2,\lceil \tfrac{g-30}{28}\rceil)$-coloring.

By considering the graph $S(K_n,\ell)$, for large $n$ and $\ell$, it
is not difficult to see that for any $k$, there is a constant
$\epsilon>0$ such that we can construct (for infinitely many values of
$g$) graphs of Euler genus $g$ with no $(k,k,\lceil\epsilon\,
g\rceil)$-coloring. 

\smallskip

Note however that if we let the maximum degree of the
second color class be a function of $g$, then the
maximum degree of the third color class can be made sublinear: it can
be derived from the main result of~\cite{Woo11} that every graph of
Euler genus $g$ is $(9,O(\sqrt{g}),O(\sqrt{g}))$-colorable. In the
next subsection, we will prove that every graph of
Euler genus $g$ is $(2,O(\sqrt{g}),O(\sqrt{g}))$-colorable and the
constant 2 there is best possible. It is a folklore result that for any $k$, there exist planar graphs that are not
$(1,k,k)$-colorable. Since we have not been able to find a reference
of this result, we include a construction below for the sake of
completeness. This result implies that Theorems~\ref{thm:000k},
\ref{thm:22k}, \ref{thm:2kk}, and \ref{thm:00kk} cannot be improved by reducing the number of colors,
or the maximum degree of the monochromatic components (except for the
color classes whose degree depends on $g$).

\smallskip

{\bf Construction of planar graphs that are not
  $(1,k,k)$-colorable.} 
In a $(1, k, k)$-coloring, let $1, k_1,k_2$ be
the three colors where the vertices of color $1, k_1, k_2$ induce a
graph of maximum degree at most $1, k, k$, respectively. 
Given a planar graph $G$ and two adjacent vertices $x$ and $y$, by
\emph{thickening} the edge $xy$ we mean adding $2k+1$ pairwise disjoint paths on 3
vertices to $G$, and making all the newly added vertices adjacent to
both $x$ and $y$ (see Figure~\ref{fig-non1kk}, left). Note that this can be done in such way that the
resulting graph $H$ is also planar. We claim that in any $(1, k,
k)$-coloring $c$ of $H$, we do not have
$\{c(x),c(y)\}=\{k_1,k_2\}$. Otherwise, some path on 3 vertices joined
to $x$ and $y$ would not contain colors $k_1$ and $k_2$, and then some
vertex of color 1 would have two neighbors colored 1, a contradiction.

Now, take a cycle $C$ on $3k+1$
vertices, and add a vertex $z$ adjacent to all the vertices of $C$. The
obtained graph $G_z$ is planar. Now, thicken all the edges of $G$
joining $z$ and $C$, and
call the resulting graph $H_z$ (see Figure~\ref{fig-non1kk}, center). We claim that in any $(1, k,k)$-coloring $c$ of $H_z$, $c(z)=1$. Suppose for the sake of
contradiction that $z$ has color $k_1$ or $k_2$ (say $k_1$ by
symmetry). Then at least $2k+1$ of the neighbors of $z$ in $C$ have
color 1 or $k_2$. Since no three consecutive vertices of $C$ have
color 1, at least one vertex $u$ of $C$ has color $k_2$. Since the
edge $uz$ was thickened, this contradicts the previous paragraph.

\begin{figure}[ht]
	\begin{center}
  \includegraphics[scale=0.7]{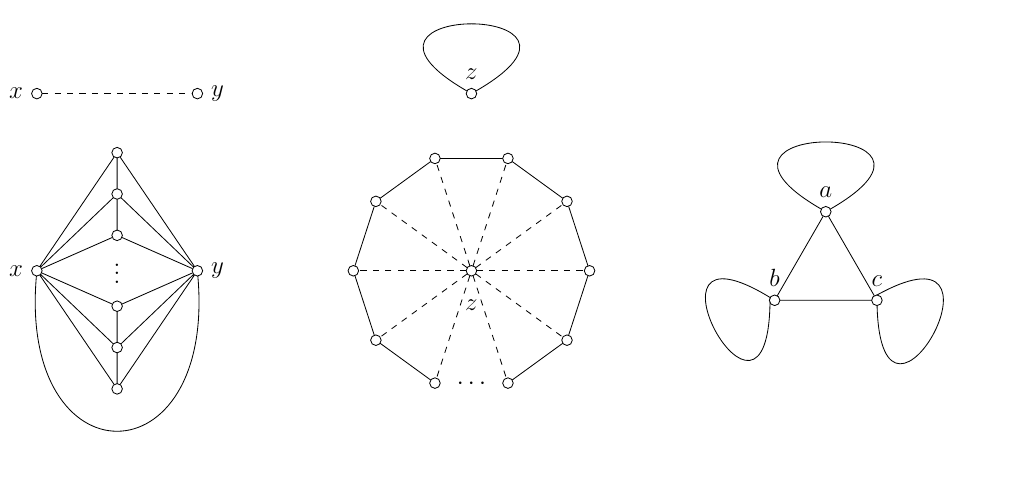}
  \caption{A construction of a planar graph that is not $(1,k,k)$-colorable.}
  \label{fig-non1kk}
	\end{center}
\end{figure}

Our construction now proceeds as follows. Start with a triangle $abc$,
and then identify $a$ with the vertex $a$ of some copy of $H_a$, $b$
with the vertex $b$ of some copy of $H_b$ and $c$ with the vertex $c$ in
some copy of $H_c$ (see Figure~\ref{fig-non1kk}, right). Note that in any $(1, k, k)$-coloring of this graph,
at least one of $a,b,c$ has a color distinct from 1. It then follows from
the previous paragraph that this graph is not $(1, k, k)$-colorable.

\subsection{Two parts with large maximum degrees}

\begin{thm}\label{thm:2kk}
Every graph embeddable on a surface of Euler genus $g$ is $(2, K, K)$-colorable where $K=K(g)=38+\sqrt{84g+1682}$.
\end{thm}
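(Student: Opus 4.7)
The plan is to argue by contradiction. Assume $G$ is a minimum counterexample (minimum with respect to $|V(G)|+|E(G)|$) to the theorem, embedded cellularly on a surface of Euler genus~$g$; since the bound $K(g)=38+\sqrt{84g+1682}$ is monotone nondecreasing in~$g$, we may assume cellularity by replacing~$g$ with the Euler genus of a surface on which $G$ embeds cellularly. Applying Lemmas~\ref{lem:vx-degree} and~\ref{lem:num-high} with parameters $j=2$, $k=3$, and $(d_1,d_2,d_3)=(K,K,2)$ (after reordering the color classes so that the higher defects come first) yields that every vertex of degree at most $K+2$ has at least~2 neighbors of degree at least $K+3$, and that the set $V_h:=\{v\in V(G):d(v)\geq K+3\}$ has cardinality at least $1+(K+1)+3=K+5$. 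Set $V_\ell:=V(G)\setminus V_h$. A short direct extension argument also shows that $G$ has minimum degree at least~3: any vertex~$v$ of degree at most~2 has at most 2 neighbors, each forbidding at most one of the three available colors (its own, in case it is saturated), so at least one color is always available for~$v$.

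Next, set up a discharging argument. Assign initial charges $\mu(v)=d(v)-4$ to each vertex~$v$ and $\mu(f)=d(f)-4$ to each face~$f$; by Euler's formula the total charge equals $4g-8$. The discharging rules transfer charge from $V_h$-vertices to their $V_\ell$-neighbors (exploiting the at least~2 $V_h$-neighbors of each $V_\ell$-vertex guaranteed by Lemma~\ref{lem:vx-degree}) and to incident triangular faces. The rules are designed so that every vertex and every face ends with non-negative final charge, while each $V_h$-vertex retains charge of order $\Omega(K)$; this can be achieved by letting each $V_h$-vertex~$v$ give out at most $d(v)-4$ in total (distributed proportionally among its neighbors and incident triangles), so that $v$ is never drained below~$0$, while each $V_\ell$-vertex of low degree accumulates contributions from its high-degree neighbors that grow with~$K$.

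Summing the final charges then yields a total of at least $(K+5)\cdot\Omega(K)=\Omega(K^2)$, which must equal the actual total $4g-8$. Chasing the constants through the discharging rules produces the precise inequality $(K-38)^2\leq 84g+1682$, contradicting the choice of~$K$. The main obstacle is the design of the discharging rules themselves: $V_h$-vertices can have unboundedly many low-degree neighbors and incident triangles, so a flat per-neighbor donation would drain them, while insufficient donations would leave $V_\ell$-vertices of degree~3, 4, or~5 (and triangular faces) in deficit. Balancing these competing requirements, together with a bipartite Euler bound on the subgraph between $V_h$ and $V_\ell$ that controls the number of cross edges, is what produces the specific constants $38$, $84$, and $1682$ in the bound on~$K$.
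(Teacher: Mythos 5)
Your high-level setup matches the paper: you correctly invoke Lemmas~\ref{lem:vx-degree} and~\ref{lem:num-high} with $j=2$, $k=3$ (obtaining that every low vertex has at least two high neighbors and that there are at least $K+5$ high vertices), and you recognize that a discharging argument must close the proof. However, there is a genuine gap: you write that ``the main obstacle is the design of the discharging rules themselves'' and then do not design them. You only describe qualitative goals (nonnegative final charge everywhere, $\Omega(K)$ surplus at high vertices) without specifying a single concrete rule, so the core of the proof is missing.

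Several issues the missing rules would have to resolve, which your sketch does not address. Your initial charge $\mu(v)=d(v)-4$, $\mu(f)=d(f)-4$ makes every triangular face start at $-1$; since the number of triangles is not controlled by $g$ or $K$, you would need vertices to subsidize triangles, and you would then need to certify that $3$-, $4$-, and $5$-vertices can both receive from their (at least two) high neighbors and also pay into their incident triangles without going negative, and that high vertices are not drained in the process. The paper avoids this entirely by taking $\mu(v)=d(v)-6$ and $\mu(f)=2d(f)-6$, so triangles start at $0$ and only $4^+$-faces participate in discharging. The paper also needs a structural claim --- no two vertices of degree at most $4$ are adjacent, proved by a short recoloring argument --- to make its rule that low $5^+$-vertices subsidize $4^-$-vertices sound; you have no analogue of this, and without it a $3$-vertex could have all three neighbors of degree at most $4$. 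Finally, the inequality you claim to derive, $(K-38)^2\le 84g+1682$, is satisfied \emph{with equality} by $K=38+\sqrt{84g+1682}$ and hence gives no contradiction; the paper derives the strictly stronger $K^2-76K-84g-237\le 0$, which fails because the chosen $K$ makes that expression equal to $1$. The ``bipartite Euler bound'' on the subgraph between high and low vertices that you invoke does not appear in the paper's argument and is not needed.
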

\begin{proof}
Assume for the sake of contradiction that there is a graph $G$ embeddable on a surface of Euler genus $g$ that is not $(2,K,K)$-colorable. We choose $g$ minimum, and with respect to this, we choose $G$ such that the sum of the number of vertices and the number of edges is minimum. By the minimality of $g$ we may assume that $G$ is cellularly embedded on a surface of Euler genus $g$ (see~\cite[Propositions 3.4.1 and 3.4.2]{MoTh}) and from now on, we fix this embedding. 

By the minimality of $G$, we can also assume that $G$ is connected and has minimum degree at least 3.
A \emph{high} and \emph{low} vertex is a vertex of degree at least $K+3$ and at most $K+2$, respectively. 
By Lemma~\ref{lem:vx-degree}, every low vertex is adjacent to at least two high vertices.
By Lemma~\ref{lem:num-high}, $G$ contains at least $K+5$ high vertices. 



\begin{claim}\label{cl:8}
No two vertices of degree at most 4 are adjacent. 
\end{claim}
\begin{proof}
Assume for the sake of contradiction that two vertices $u,v$ of degree at most 4 are adjacent. 
By the minimality of $G$, the graph obtained from $G$ by removing the edge $uv$ has a $(2,K,K)$-coloring $c$. 
Let the three colors be $2,k_1,k_2$ so that the maximum degree of the graph induced by $2,k_1,k_2$ is at most $2,K,K$, respectively.
Since $G$ itself is not $(2,K,K)$-colorable, both $u$ and $v$ are colored 2 and at least one of $u,v$, say $u$, has two neighbors (distinct from $v$) that are also colored 2. 
As a consequence, either $k_1$ or $k_2$ does not appear in the neighborhood of $u$.
We can therefore recolor $u$ with the missing color to get a $(2,K,K)$-coloring of $G$, a contradiction.
\end{proof}

We will use the discharging procedure laid out in Subsection~\ref{subsec:discharging}.
For a vertex $v$ and a face $f$ of $G$, the initial charge is $d(v)-6$ and $2d(f)-6$, respectively. 
The initial charge sum is $6g-12$ by Euler's formula. 

Here are the discharging rules:


\begin{enumerate}[(R1)]

\item Each face distributes its initial charge (evenly) to its incident vertices of degree 3.

\item Each high vertex sends charge $\tfrac{13}{14}$ to each low neighbor. 

\item For each high vertex $v$ and each sequence of three consecutive neighbors $u_1,u_2,u_3$ of $v$ in clockwise order around $v$ such that $u_2$ is high, $v$ sends charge $\tfrac{13}{28}$ to each of $u_1$ and $u_3$.

\item Every low vertex of degree at least 5 sends charge $\tfrac3{14}$ to each neighbor of degree at most 4.
\end{enumerate}

We now analyze the charge of each vertex and each face after the discharging procedure.

Since every face has degree at least 3, every face has nonnegative initial charge and does not send more that its initial charge by (R1). 
Therefore, the final charge of each face is nonnegative.

For a $3$-vertex $v$, let $x,y,z$ be the neighbors of $v$. 
By Lemma~\ref{lem:vx-degree} and Claim~\ref{cl:8}, we may assume without loss of generality that $x,y$ are high and $z$ has degree at least 5. 
First, assume that the face $f$ incident to the edges $vx$ and $vy$ is a triangle, which implies that $x$ and $y$ are adjacent. 
Then $v$ receives charge $\tfrac{13}{14}$ (by (R2)) and $\tfrac{13}{28}$ (by (R3)) from $x$, and the same amount from $y$. Note that $v$ also receives $\tfrac3{14}$ from $z$ by (R4). 
As a consequence, the final charge of $v$ is at least $-3 +2\cdot \tfrac{13}{14}+2 \cdot \tfrac{13}{28}+\tfrac3{14}=0$. 
Now assume that the face $f$ has degree $d\ge 4$. 
Then, $v$ receives charge $\tfrac{13}{14}$ from each of $x$ and $y$ by (R2), and $\tfrac3{14}$ from $z$ by (R4). But since $f$ contains at least two high vertices, it also sends charge at least $\tfrac{2d-6}{d-2}\ge 1$ to $v$ (since $d\ge 4$). 
As a consequence, the final charge of $v$ is at least $-3 +2\cdot \tfrac{13}{14}+\tfrac3{14}+1=\tfrac1{14}$.

Let $v$ be a vertex of degree 4. Since $v$ has at least two high neighbors, and all the neighbors of $v$ have degree at least 5, $v$ receives charge at least $2\cdot \tfrac{13}{14}$ by (R2) and $2\cdot \tfrac{3}{14}$ by (R4). The final charge of $v$ is therefore at least $-2+2\cdot \tfrac{13}{14}+2\cdot \tfrac{3}{14}=\tfrac27$.

Let $v$ be a low vertex of degree $d\ge 5$. 
Then $v$ receives charge at least $2\cdot \tfrac{13}{14}$ by (R2) since it has at least two high neighbors, and sends charge at most $(d-2)\tfrac3{14}$ by (R4). Therefore, the final charge of $v$ is at least $d-6+2\cdot \tfrac{13}{14}-(d-2)\tfrac3{14}\ge \tfrac{3}{14}$.

Finally, let $v$ be a high vertex of degree $d$ (recall by definition, $d\ge K+3$). 
Then $v$ sends charge at most $\tfrac{13}{14}d$ by (R2) and (R3) and its final charge is at least $d-6-\tfrac{13}{14}d=\tfrac{d}{14}-6\ge \tfrac{K-81}{14}$.

Since there are at least $K+5$ high vertices, the total final charge (which equals $6g-12$) is at least $(K+5)\tfrac{K-81}{14}$. We obtain $K^2-76K-84g-237\le 0$, and this contradicts our choice of $K$ since $K$ satisfies $K^2-76K-84g-237=1$.
\end{proof}

We now prove that the order of magnitude of $K(g)$ in Theorem~\ref{thm:2kk} is best possible. For a given $k\ge 0$, we construct the following graph $G_k$. Start with a copy of $K_4$ (which we call the {\it basic copy} of $K_4$), together with $k+1$ other disjoint copies of $K_4$, and add all possible edges between the vertices of the basic copy of $K_4$ and the vertices of the other copies of $K_4$ (but no edge between two non-basic copies of $K_4$). 
These edges are called the {\it support edges} of the construction. 
Now, for each support edge $uv$, create $2k+1$ new disjoint copies of $K_4$ and join $u$ and $v$ to all the newly created vertices. 
Note that the resulting graph $G_k$ has $128k^2+196k+72$ vertices and
$448k^2+694k+252$ edges. It follows from Euler's Formula (and the fact
that any connected graph has a minimum Euler genus embedding that is
cellular) that any connected
graph on $n$ vertices and $m$ edges has Euler genus at most $m-n+1$.
In particular, the graph $G_k$ has Euler genus at most $320k^2+498k+181$.
Consider any $(2,k,k)$-coloring of $G_k$. We adopt the same convention as in the previous proof (the colors are named $2,k_1,k_2$). 
Then at least one of the 4 vertices of the basic copy of $K_4$, call it $u$, is colored $k_1$ or $k_2$, say $k_1$. 
Since $u$ is adjacent to all the vertices in the $k+1$ non-basic copies of $K_4$, at least one of them  contains a vertex $v$ of color $k_2$. 
At most $k$ of the $2k+1$ copies of $K_4$ joined to both $u,v$ contain a vertex colored $k_1$, and at most $k$ of them contain color $k_2$. 
Therefore, at least one copy of $K_4$ contains vertices only colored with 2, which is a contradiction. 
It follows that $G_k$ is not $(2,k,k)$-colorable. 
Consequently, there is a constant $c>0$ and infinitely many values of $g$, for which we can construct a graph embeddable on a surface of Euler genus $g$, with no $(2,\lfloor c\sqrt{g}\rfloor,\lfloor c\sqrt{g}\rfloor)$-coloring.

\smallskip

Note that the same analysis shows that $G_k$ is not $(0,0,k,k)$-colorable. 
We can even replace each copy of $K_4$ by a triangle, and this property remains true. 
Therefore this graph also shows that we can construct, for infinitely many values of $g$, a graph embeddable on a surface of Euler genus $g$, with no $(0,0,\lfloor c\sqrt{g}\rfloor,\lfloor c\sqrt{g}\rfloor)$-coloring. The next result shows that this is also asymptotically best possible.

\begin{thm}\label{thm:00kk}
Every graph embeddable on a surface of Euler genus $g$ is $(0,0,K,K)$-colorable, with $K=K(g)=20+\sqrt{48g+481}$.
\end{thm}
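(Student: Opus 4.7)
The plan is to mirror the proof of Theorem~\ref{thm:2kk}. Suppose for contradiction that some graph of Euler genus $g$ fails to be $(0,0,K,K)$-colorable, and choose such a graph $G$ minimizing first $g$ and then $|V(G)|+|E(G)|$. The minimality of $g$ combined with~\cite[Propositions 3.4.1 and 3.4.2]{MoTh} yields a cellular embedding of $G$ on a surface of Euler genus $g$, and the usual vertex-deletion argument forces $G$ to be connected with $\delta(G)\ge 3$. Set up the framework of Subsection~\ref{subsec:discharging} in the regime $(d_1,d_2,d_3,d_4)=(K,K,0,0)$, so $j=2$ and $k=4$. Call a vertex \emph{high} if its degree is at least $K+4$ and \emph{low} otherwise. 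Lemma~\ref{lem:vx-degree} then guarantees that every low vertex has at least two high neighbors, and Lemma~\ref{lem:num-high} yields at least $1+(K+1)+1+1=K+4$ high vertices.

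Next, I would establish a small-configuration reducibility claim analogous to the one in the proof of Theorem~\ref{thm:2kk}: no two vertices of degree at most~$3$ are adjacent (the exact threshold being dictated by what the discharging later needs). The argument is the same edge-swap trick as before: delete $uv$, apply the minimality of $G$ to color $G-uv$, and observe that a small-degree endpoint must admit a missing stable-set color in its neighborhood, forcing an easy recoloring swap that extends to $G$.

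Now run the discharging procedure of Subsection~\ref{subsec:discharging} with initial charges $\mu(v)=d(v)-6$ and $\mu(f)=2d(f)-6$, so that $\sum_z \mu(z)=6g-12$ by Euler's formula. The rules I would use are a direct adaptation of (R1)--(R4) in Theorem~\ref{thm:2kk}:
\begin{enumerate}[(R1)]
\item each face gives its initial charge evenly among its incident $3$-vertices;
\item each high vertex gives a fixed amount $\alpha$ to every low neighbor;
\item for each high vertex $v$ and each triple $u_1,u_2,u_3$ of consecutive neighbors of $v$ in the rotation with $u_2$ high, $v$ gives $\alpha/2$ to each of $u_1$ and $u_3$;
\item each low vertex of degree $\ge 5$ gives a small charge to every neighbor of degree at most $4$.
\end{enumerate}
The plan is to choose the amounts in (R1) and (R4) together with $\alpha$ so that every $3$-vertex, every $4$-vertex, every low vertex of larger degree, and every face end with non-negative charge. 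Taking $\alpha=\tfrac78$, the outflow at any high vertex $v$ of degree $d$ is at most $\alpha d$, so its final charge is at least $(1-\alpha)d-6=\tfrac{d-48}{8}\ge \tfrac{K-44}{8}$. Summing over at least $K+4$ high vertices gives $6g-12\ge(K+4)\cdot\tfrac{K-44}{8}$, i.e., $K^2-40K-48g-80\le 0$. Since $K=20+\sqrt{48g+481}$ satisfies $K^2-40K-48g-81=0$, this is the desired contradiction.

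The main technical obstacle is the calibration of $\alpha$ against the small local transfers in (R1) and (R4). The tightest case is a $3$-vertex $v$ whose two high neighbors share a triangular face with $v$: then $v$ receives nothing from that face and must absorb its initial deficit of~$3$ purely through its three edges via (R2)--(R4), which is precisely what the flanking rule (R3) is designed for, exactly as in the proof of Theorem~\ref{thm:2kk}. Verifying that the chosen values simultaneously meet all the low-vertex demands while keeping the amortized per-edge outflow at each high vertex within $\alpha=\tfrac78$ is the remaining bookkeeping step.
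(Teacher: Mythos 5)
The skeleton of your argument (setup, charges, the flanking rule (R3) with $\alpha = 7/8$, the count of at least $K+4$ high vertices, and the final inequality $K^2 - 40K - 48g - 80 \le 0$) matches the paper, but the part you defer to ``remaining bookkeeping'' is precisely where the real difficulty lies, and your plan as written does not get through it.

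First, a calibration issue: the minimum degree of a vertex-minimal counterexample here is at least $4$, not $3$. With four colors, a vertex of degree $3$ always has a usable color in $G-v$, since three neighbors leave at least one of the four colors unused. Consequently your reducible configuration ``no two vertices of degree at most $3$ are adjacent'' is vacuous, and your identified tight case (a $3$-vertex flanked by two high neighbors in a triangle) simply does not occur. The genuinely tight case is a $4$-vertex $v$ with initial charge $-2$, exactly two high neighbors $u_1,u_3$, two degree-$4$ neighbors $u_2,u_4$, all four incident faces triangular, and $u_1,u_3$ non-consecutive in the rotation around $v$. In that situation (R1), (R3), and (R4) all contribute nothing, so $v$ receives only $2\cdot \tfrac78 = \tfrac74$ and ends at $-\tfrac14 < 0$. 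No choice of $\alpha$ fixes this while keeping the high-vertex outflow at or below $\tfrac78 d$, because the amount $v$ can receive per high edge is bounded by $\alpha$, and two high edges only give $2\alpha \le \tfrac74 < 2$.

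To kill this configuration the paper needs two further ingredients that are absent from your plan. One is a different reducibility claim proved by vertex identification rather than edge deletion: if a $4$-vertex $v$ with neighbors $u_1,\dots,u_4$ is surrounded by four triangular faces, then $u_1u_3$ and $u_2u_4$ are edges of $G$; the proof deletes $v$, identifies $u_1$ and $u_3$, invokes minimality to color the smaller graph, and observes that only three colors appear around $v$. The other is the assumption that $G$ is edge-maximal in its embedding (which forces the minimality criterion to be on $|V(G)|$ alone rather than $|V(G)|+|E(G)|$), so that the rotation around each vertex has consecutive neighbors adjacent. Combining these, the bad $4$-vertex configuration is shown to be impossible: the four triangles plus the two diagonals force $v,u_2,u_4$ to have closed neighborhoods inside $\{v,u_1,u_2,u_3,u_4\}$, which is incompatible with the rotation system around the high vertex $u_1$ (which has many more than four neighbors). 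Without both the identification claim and the edge-maximality assumption, the discharging does not close, so the proposal as stated has a genuine gap.
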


\begin{proof}

Assume for the sake of contradiction that there is a graph $G$ embeddable on a surface of Euler genus $g$ that is not $(0,0,K,K)$-colorable. We choose $g$ minimum, and with respect to this, we choose $G$ such that the sum of the number of vertices is minimum. By the minimality of $g$ we may assume that $G$ is cellularly embedded on a surface of Euler genus $g$ (see~\cite[Propositions 3.4.1 and 3.4.2]{MoTh}) and from now on, we fix this embedding. 

Moreover, we can assume that $G$ is edge-maximal with respect to this embedding (and such that $G$ is simple), since if a supergraph of $G$ can be $(0,0,K,K)$-colored, then $G$ can also be $(0,0,K,K)$-colored. In particular, it follows that for every vertex $v$, there is a circular ordering on the neighbors of $v$ such that any two consecutive vertices in this ordering are adjacent (note that $G$ does not necessarily triangulate the surface it is embedded in).

By the minimality of $G$, we can also assume that $G$ is connected and has minimum degree at least 4.
A \emph{high} and \emph{low} vertex is a vertex of degree at least $K+4$ and at most $K+3$, respectively. 
By Lemma~\ref{lem:vx-degree}, every low vertex is adjacent to at least two high vertices.
By Lemma~\ref{lem:num-high}, $G$ contains at least $K+4$ high vertices. 


\begin{claim}\label{cl:4tri}
Let $v$ be a $4$-vertex with neighbors $u_1,u_2,u_3,u_4$.
If $vu_1u_2$, $vu_2u_3$, $vu_3u_4$, and $vu_4u_1$ are triangular faces, then $u_1u_3$ and $u_2u_4$ are edges in $G$. 
\end{claim}
\begin{proof}
Without loss of generality, assume that $u_1$ and $u_3$ are not adjacent. 
Remove $v$ and identify $u_1$ and $u_3$ into a single vertex.
Note that this can be done in such a way that the resulting graph is still embeddable on the same surface. 
By the minimality of $G$, the resulting graph is $(0,0,K,K)$-colorable and any $(0,0,K,K)$-coloring can easily be extended to $v$ since only three colors appear in its neighborhood, a contradiction. 
\end{proof}

%

We will use the discharging procedure laid out in Subsection~\ref{subsec:discharging}.
For a vertex $v$ and a face $f$ of $G$, the initial charge is $d(v)-6$ and $2d(f)-6$, respectively. 
The initial charge sum is $6g-12$ by Euler's formula. 

Here are the discharging rules:

\begin{enumerate}[(R1)]
\item Each face of degree at least 4 sends charge $\tfrac14$ to each incident vertex of degree 4.

\item Each high vertex sends charge $\tfrac78$ to each low neighbor. 

\item For each high vertex $v$ and each sequence of three consecutive neighbors $u_1,u_2,u_3$ of $v$ in clockwise order around $v$ such that $u_2$ is high, $v$ sends charge $\tfrac7{16}$ to each of $u_1$ and $u_3$.

\item Every low vertex of degree at least 5 sends charge $\tfrac14$ to each neighbor of degree 4.
\end{enumerate}

We now analyze the charge of each vertex and each face after the discharging procedure.

Every face of degree 3 has initial charge $0$, and since it is not involved in any discharging rules, the final charge is also $0$. 
Every face $f$ of degree $d\ge 4$ starts with charge $2d-6$ and sends at most $\tfrac{d}{4}$ by (R1). The final charge of $f$ is therefore at least $2d-6-\tfrac{d}{4}\ge 1$.

Let $v$ be a vertex of degree 4. Then $v$ receives charge $\tfrac78$ from each of its (at least) two high neighbors by (R2). 
If $v$ either has another neighbor of degree at least 5 or is incident to a face of degree at least 4, then $v$ receives an additional charge of $\tfrac14$ and its final charge is therefore at least $-2+\tfrac78+\tfrac78+\tfrac14=0$. 
Otherwise, we can assume that $v$ is adjacent to precisely two high vertices $u_1,u_3$ and two vertices $u_2, u_4$ of degree 4, and is incident to four triangular faces. 
Note also that if $vu_1u_3$ is a face of $G$, then by rule (R3) $v$ receives an additional charge of $\tfrac{7}{16}$ and therefore its final charge is $-2+\tfrac78+\tfrac78+\tfrac7{16}\ge \tfrac3{16}$. 

As a consequence, we can assume without loss of generality that the faces incident with $v$ are $vu_1u_2$, $vu_2u_3$, $vu_3u_4$, and $vu_4u_1$. 
It follows from Claim~\ref{cl:4tri} that $u_1$ and $u_3$ are adjacent and $u_2$ and $u_4$ are adjacent. 
Recall that the embedding of $G$ is edge-maximal, and thus there is an ordering of the neighbors of $u_1$ such that any two consecutive vertices in the ordering are adjacent. 
Since $u_1$ has more than 4 neighbors, it follows that at least one of $v,u_2,u_4$ is adjacent to a vertex not in  $\{v,u_1,u_2,u_3,u_4\}$, a contradiction.

Let $v$ be a low vertex of degree $d\ge 5$. Then $v$ receives charge $\tfrac78$ from each of its (at least) two high neighbors by (R2), and sends at most $(d-2)\tfrac14$ by (R4). 
Its final charge is therefore at least $d-6+{7\over 8}+{7\over 8}-(d-2)\tfrac14\ge 0$.

Finally, let $v$ be a high vertex. Then $v$ sends charge at most $\tfrac78d$ by (R2) and (R3), so its new charge is at least $d-6-\tfrac78d=\tfrac{d}8-6\ge \tfrac{K-44}8$.

We proved that each vertex and face has nonnegative charge, and each high vertex has charge at least $\tfrac{K-44}8$. 
Since there are at least $K+4$ high vertices, the total charge (which equals $6g-12$) is at least $(K+4)\tfrac{K-44}8$. We obtain $K^2-40K-48g-80\le 0$, and this contradicts our choice of $K$ since $K$ satisfies $K^2-40K-48g-80=1$.
\end{proof}

\section{Triangle-free graphs on surfaces}

\begin{thm}\label{tri-free}
Every triangle-free graph embeddable on a surface of Euler genus $g$ is $(0, 0, K)$-colorable where $K=K(g)=\lceil{10g+32\over 3}\rceil$.
\end{thm}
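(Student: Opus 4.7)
The plan is to follow the framework of Theorems~\ref{thm:000k}--\ref{thm:00kk}: take a minimum counterexample, derive a contradiction by combining reducibility claims with a discharging argument on its embedding, and then produce a matching lower bound by iterating a triangle-free gadget.

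For the upper bound, let $G$ be a triangle-free graph embeddable on a surface of Euler genus $g$ that is not $(0,0,K)$-colorable with $K=\lceil (10g+32)/3 \rceil$, chosen to minimize $g$ and then $|V(G)|+|E(G)|$. Standard minimality arguments force $G$ to be connected, cellularly embedded, and of minimum degree at least $3$, since vertices of degree at most $2$ are easily colored last. Applying Lemmas~\ref{lem:vx-degree} and~\ref{lem:num-high} with $j=1$, $k=3$, and $(d_1,d_2,d_3)=(K,0,0)$ (after reordering the colors), every \emph{low} vertex (degree $\leq K+2$) has a \emph{high} neighbor (degree $\geq K+3$), and $G$ contains at least three high vertices. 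I would then prove additional reducible configurations specific to the triangle-free setting: for instance, no two $3$-vertices are adjacent, and each $3$-vertex has at least two high neighbors. Both claims follow by removing the offending configuration, applying minimality to obtain a $(0,0,K)$-coloring of the smaller graph, and extending through recolorings that exploit the triangle-free structure (adjacent vertices share no common neighbor) and the surplus of non-$c_3$ neighbors possessed by any saturated high vertex of degree at least $K+3$.

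For the discharging step, assign initial charge $\mu(v)=d(v)-4$ and $\mu(f)=d(f)-4$, so that Euler's formula gives total charge $4g-8$. Triangle-freeness ensures every face already has non-negative charge, so only vertex charges need to be redistributed. A natural rule is that each high vertex sends charge $\tfrac12$ to every low neighbor. Under the structural claims above, every vertex then has non-negative final charge: each $3$-vertex gathers at least $2\cdot\tfrac12=1$ from its two or more high neighbors, each low vertex of degree at least $4$ is already non-negative, and each high vertex of degree $d$ retains charge at least $d/2-4\geq (K-5)/2$. Summing this residual over the three guaranteed high vertices yields a final total charge of at least $3(K-5)/2$, which after plugging in $K\geq (10g+32)/3$ is at least $5g+\tfrac{17}{2}$, strictly larger than the initial total $4g-8$ for every $g\geq 0$, the desired contradiction.

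For the lower bound, I would construct, for infinitely many $g$, a triangle-free graph embeddable on a surface of Euler genus $g$ with no $(0,0,\lfloor cg\rfloor)$-coloring for some constant $c>0$, in the spirit of the constructions $S(K_4,k)$ and $S(K_7,k)$ earlier in the paper. Replacing the copies of $K_n$ by a triangle-free gadget (for example, the Gr\"otzsch graph, or another small triangle-free graph of high chromatic number equipped with a designated apex vertex) whose $(0,0)$-improper colorings force that designated vertex into the color $c_3$, and then cascading the construction in the same nested fashion, should yield a graph with $O(g)$ edges (hence Euler genus $O(g)$) that admits no $(0,0,\lfloor cg\rfloor)$-coloring. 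The main obstacle is the two-high-neighbor structural claim for $3$-vertices: when the unique candidate high neighbor is already saturated in color $c_3$ and has non-$c_3$ neighbors in both colors $c_1$ and $c_2$, no single direct recoloring suffices and one must invoke a Kempe-chain-style swap; tuning this claim together with the chosen discharging rule is what pins down the precise constant $\tfrac{10}{3}$.
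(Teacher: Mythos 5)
Your framework (minimum counterexample, Coloring Lemmas, discharging on the embedding, matching lower bound) is the same as the paper's, but two of the load-bearing steps are wrong.

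First, the structural claim you propose for $3$-vertices is too strong. You plan to show that in a minimum counterexample every $3$-vertex has at least two high neighbors (and that no two $3$-vertices are adjacent), and your discharging then rests entirely on a single rule: high vertices donate $\tfrac12$ to low neighbors. But Lemma~\ref{lem:vx-degree} with $j=1$ only yields \emph{one} high neighbor for a low vertex, and a $3$-vertex $v$ with one high neighbor $u_1$ and two low neighbors $u_2,u_3$ (say of degree $4$) is not an obviously reducible configuration: after deleting $v$, the remaining colors on $u_1,u_2,u_3$ may be exactly $\{c_1,c_2,k\}$ with $u_1$ carrying $k$, and since $u_1$ may be $k$-saturated and of unbounded degree, there is no local recoloring that frees a color for $v$; cascades into $k$-saturated neighbors of $u_2$ or $u_3$ block the naive Kempe swaps. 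The paper instead proves the strictly weaker Claim~\ref{vx-3} (a $3$-vertex adjacent to at least two $3$-vertices is incident to a $5^+$-face, established by an identification trick), and then compensates by \emph{three} discharging rules: high vertices send $\tfrac45$ to all neighbors, \emph{medium} vertices (i.e.\ low $4^+$-vertices) send $\tfrac15$ to adjacent $3$-vertices, and $5^+$-faces send $\tfrac15$ to incident $3$-vertices. Your rule leaves $3$-vertices with exactly one high neighbor at charge $-\tfrac12$, and there is no mechanism to rescue them. Related to this, the paper needs at least \emph{six} high vertices (Claim~\ref{num-high}, whose proof genuinely uses triangle-freeness via a $5$-cycle analysis), not the three coming from Lemma~\ref{lem:num-high}; the tighter rule $\tfrac45$ leaves each high vertex with only slightly more than $(2g-4)/3$ excess, so six are required to beat the total $4g-8$. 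Your arithmetic works out only because it inherits the false assumption that one rule suffices.

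Second, the tightness construction you sketch cannot be carried out as stated. Replacing $K_n$ by a triangle-free gadget in $S(H,k)$ and ``joining a basic vertex to all vertices of each attached copy'' immediately creates triangles whenever $H$ has an edge, so the resulting graph is not triangle-free. The paper's Section~\ref{subsection-tight} avoids this with a Blanche Descartes-style construction: disjoint $7$-cycles attached by \emph{perfect matchings} to well-spread $7$-sets, iterated so as to keep girth at least $6$. The combinatorial reason it works is that a $C_7$ is not properly $2$-colorable (so one of the three colors is forced to be $k$ somewhere on each $7$-cycle), replacing the role $K_4$ plays in the $S(K_4,k)$ construction. You should replace the $S(H,k)$-based plan with a matching-based one of this kind.
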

\begin{proof}
Assume for the sake of contradiction that there is a triangle-free graph $G$ embeddable on a surface of Euler genus $g$ that is not $(0,0,K)$-colorable. 
We choose $g$ minimum, and with respect to this, we choose $G$ such that the sum of the number of vertices and the number of edges is minimum. 
By the minimality of $g$ we may assume that $G$ is cellularly embedded on a surface of Euler genus $g$ (see~\cite[Propositions 3.4.1 and 3.4.2]{MoTh}) and from now on, we fix this embedding. 

By the minimality of $G$, we can also assume that $G$ is connected and has minimum degree at least 3.
A \emph{high} and \emph{low} vertex is a vertex of degree at least $K+3$ and at most $K+2$, respectively. 
A $4^+$-vertex that is not high is a {\it medium} vertex. 
By Lemma~\ref{lem:vx-degree}, every low vertex has at least one high neighbor.

We will also assume that for a (partial) $(0, 0, K)$-coloring $\varphi$ of $G$, the three colors will be $a, b, k$ and the graph induced by the color $a$, $b$, $k$ has maximum degree at most $0, 0, K$, respectively. 

%

\begin{claim}\label{vx-3}
Every $3$-vertex in $G$ that is adjacent to at least two $3$-vertices is incident to a $5^+$-face. 
\end{claim}
\begin{proof}
Assume for the sake of contradiction that there is a $3$-vertex $v$ that is adjacent to two $3$-vertices and is incident to only $4$-faces. 
Note that $v$ cannot be adjacent to three $3$-vertices since it must be adjacent to a high vertex by Lemma~\ref{lem:vx-degree}. 
Let $u_1, u_2, u_3$ be the neighbors of $v$ where $u_2$ and $u_3$ are $3$-vertices. 
Also, for $i\in\{1, 2, 3\}$, let $w_i$ be the neighbor of $u_i$ so that $v, u_i, w_i, u_{i+1}$ are the vertices incident with a $4$-face in this order (where $u_4=u_1$). 
See Figure~\ref{fig-bad3vx} (where the white vertices do not have incident edges besides the ones drawn, and the black vertices may have other incident edges).
It is easy to check that $v, u_1, u_2, u_3, w_1, w_2, w_3$ must be all
distinct vertices, since $v, u_2, u_3$ are $3$-vertices, $u_1$ has
degree at least 3, $G$ has no $3$-cycles, and all faces incident
to $v$ are 4-faces. 
Also, $u_2$ and $w_3$ have no common neighbors, since that would create a $3$-cycle. 
Since $u_2$ and $w_3$ have no common neighbor, removing $v$ and adding the edge $u_2w_3$ results in a smaller graph $H$ that has no $3$-cycles and is embeddable on the same surface. 
Thus, $H$ has a $(0, 0, K)$-coloring $\varphi$.

We will extend this coloring of $H$ to $G$ to obtain a contradiction. 
If $\{\varphi(u_1), \varphi(u_2), \varphi(u_3)\}\ne \{a,b,k\}$, then we can use the missing color on $v$ to extend the coloring. 
Moreover, it must be that $\varphi(u_1)=k$, otherwise we could color $v$ with the color $k$ since $K\geq 4$. 
We know that $\varphi(w_2)=k$ since $\{\varphi(u_2), \varphi(u_3)\}=\{a, b\}$.
Also, $\varphi(w_3)=k$ since $\{\varphi(u_2), \varphi(u_3)\}=\{a, b\}$ and $u_2w_3$, $u_3w_3$ are edges in $H$.
Now we can color $v$ with $\varphi(u_3)$ and recolor $u_3$ with $\varphi(u_2)$.
This is a $(0, 0, K)$-coloring of $G$, which is a contradiction. 
\end{proof}

\begin{figure}[h]
	\begin{center}
  \includegraphics[scale=0.9]{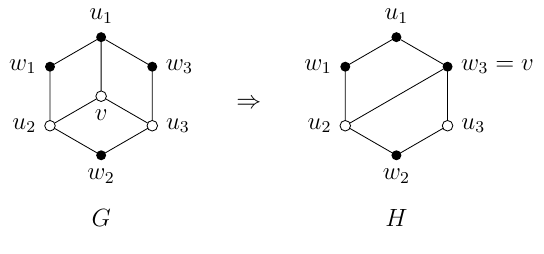}
  \caption{Obtaining $H$ from $G$ in Lemma~\ref{vx-3}}
  \label{fig-bad3vx}
	\end{center}
\end{figure}

By Lemma~\ref{lem:num-high}, $G$ contains at least three high vertices. 
By using the fact that the graph is triangle-free, we can guarantee more high vertices. 

\begin{claim}\label{num-high}
There are at least six high vertices in $G$. 
\end{claim}
\begin{proof}
Let $S$ be the set of high vertices in $G$, and assume for the sake of
contradiction that $|S|\le 5$. If $S$ induces a bipartite subgraph of
$G$, we color $S$ properly with colors $a$ and $b$. Otherwise, since
$G$ is triangle-free, it follows that $S$ induces a 5-cycle $C$. In this
case, we color the vertices of $C$ using colors $a,b,a,b,k$, in this order, and call
$v$ the unique vertex of $C$ colored $k$. Let $N$ be the set of
neighbors of $v$ not in $C$. Since $G$ is triangle-free, $N$ is a
stable set and each vertex $u\in N$ has
at most one neighbor in $C$ distinct from $v$. It follows that the
coloring of $C$ can be properly extended to $N$ by assigning colors
$a$ and $b$ only.

We now complete the coloring of $G$ greedily (by considering the
uncolored vertices in an arbitrary order) as follows: if $w$ has no
neighbor colored $a$ or $b$, then assign the free color to
$w$. Otherwise, assign color $k$ to $w$. Note that each vertex that
has been colored $k$ during the greedy coloring has degree at most $K+2$, and at least one
neighbor colored $a$ and one neighbor colored $b$. Therefore, it has
at most $K$ neighbors colored $k$. This shows that $G$ is
$(0,0,K)$-colorable, which is a contradiction.
\end{proof}


We will use the discharging procedure laid out in Subsection~\ref{subsec:discharging}.
For a vertex $v$ and a face $f$ of $G$, the initial charge is $d(v)-4$ and $d(f)-4$, respectively. 
The initial charge sum is $4g-8$ by Euler's formula. 

Here are the discharging rules:

\begin{enumerate}[(R1)]

\item Each high vertex sends charge $4\over 5$ to each adjacent vertex.

\item Each medium vertex sends charge ${1\over 5}$ to each adjacent $3$-vertex. 

\item Each $5^+$-face sends charge $1\over 5$ to each incident $3$-vertex. 

\end{enumerate}

The discharging rules (R1) and (R2) indicate how the vertices send their charge to adjacent vertices. 
Rule (R3) is the only rule where a face is involved. 




We now analyze the charge of each vertex and each face after the discharging procedure.

Let $f$ be a face. 
Since $G$ has no $3$-cycles, the length of $f$ is at least $4$. 
If $f$ is a $4$-face, then no rule applies to $f$, thus $\mu^*(f)=\mu(f)=d(f)-4=0$. 
If $f$ is a $5^+$-face, then (R3) is the only rule that applies to $f$, and therefore $\mu^*(f)\geq d(f)-4-{d(f)\over 5}={4d(f)\over 5}-4\geq 0$. 

If $v$ is a $3$-vertex, then $\mu(v)=d(v)-4=-1$.
Also, $v$ receives charge ${4\over 5}$ by (R1), since $v$ is adjacent to a high vertex by Lemma~\ref{lem:vx-degree}, 
By Lemma~\ref{vx-3}, $v$ is either adjacent to at most one $3$-vertex or incident to a $5^+$-face. 
If $v$ is adjacent to at most one $3$-vertex, then $v$ receives either an additional charge of ${4\over 5}$ by (R1) or charge ${1\over 5}$ by (R2).
Thus, $\mu^*(v)\geq -1+{4\over 5}+{1\over 5}=0$.
If $v$ is incident to a $5^+$-face, then $v$ receives charge ${1\over 5}$ by (R3).
Thus, $\mu^*(v)\geq -1+{4\over 5}+{1\over 5}=0$.

If $v$ is a medium vertex, then $v$ receives charge ${4\over 5}$ by (R1) since $v$ is adjacent to a high vertex by Lemma~\ref{lem:vx-degree}. 
Also, by (R2), $v$ sends charge ${1\over 5}$ to each adjacent $3$-vertex. 
Thus, $\mu^*(v)\geq d(v)-4+{4\over 5}-{d(v)-1\over 5}={4d(v)-15\over 5}> 0$.

%
If $v$ is a high vertex, then it sends charge ${4\over 5}$ to each neighbor.
Thus, 
$\mu^*(v)
=d(v)-4-{4d(v)\over 5}
={d(v)\over 5}-4
\geq {K(g)+3\over 5}-4
={\lceil{10g+32\over 3}\rceil+3\over 5}-4
\geq {{10g+32\over 3}+3\over 5}-4
={10g-19\over 15}
>{2g-4\over 3}$. 
Thus, each high vertex $v$ has final charge greater than $2g-4\over 3$. 


According to Claim~\ref{num-high}, there are at least six high vertices. 
Since each high vertex has final charge greater than ${4g-8\over 6}$ and every other vertex and face has nonnegative final charge, the sum of the final charge is greater than $4g-8$.
This is a contradiction since the initial charge sum was $4g-8$. 
Therefore, a counterexample to Theorem~\ref{tri-free} does not exist.
\end{proof}

\subsection{Tightness example}\label{subsection-tight}

In this subsection, we will show that the growth rate of $K(g)$ in Theorem~\ref{tri-free} is tight by constructing, for some constant $\epsilon>0$ and infinitely many values of $g$, a triangle-free graph that is embeddable on a surface of Euler genus $g$ but is not $(0, 0, \lceil\epsilon g\rceil)$-colorable.
We will actually do better and construct a graph with girth $6$ that
is not $(0, 0, \lceil\epsilon g\rceil)$-colorable. Our construction is inspired by a
classical construction of Blanche Descartes~\cite{Des54}.

Given a set $S$ of seven vertices in a graph that are pairwise distance at least $3$ apart from each other, let ``adding $C_7$ to $S$'' mean that you add a disjoint copy of $C_7$ and add a perfect matching between the seven new vertices and vertices in $S$. 
Note that this operation does not create $3$-, $4$-, or $5$-cycles. 

Now, construct $H_k$ by starting with seven disjoint copies $D_1, \ldots, D_7$ of $C_7$.
For every set $\{v_1, \ldots, v_7\}$ of seven vertices where $v_i\in D_i$ for $i\in\{1, \ldots, 7\}$, do the operation of ``adding $C_7$ to $\{v_1, \ldots, v_7\}$'' $7k+1$ times.

For every $(0, 0, k)$-coloring of $H_k$, there is a vertex $u_i$ colored with the third color in each $D_i$ for $i\in \{1, \ldots, 7\}$, since a $7$-cycle cannot be properly colored with two colors. 
Now consider the $7k+1$ copies of $C_7$ added to $\{u_1, \ldots, u_7\}$. 
Since each vertex $u_i$ is adjacent to at most $k$ vertices of the third color, there must exist a copy of $C_7$ where none of the vertices are colored with the third color, a contradiction.
Hence, $H_k$ is not $(0, 0, k)$-colorable. 

Note that $H_k$ has $7^8 (7k+1)+49$ vertices and $2\cdot 7^8 (7k+1)+49$ edges, and therefore $H_k$ has Euler genus at most $7^8 (7k+1)+1$.
Hence, $H_k$ is a graph with girth $6$ that is embeddable on a surface of Euler genus at most $7^8 (7k+1)+1$ and is not $(0, 0, k)$-colorable. It follows that there is a constant $\epsilon>0$ and infinitely many values of $g$, for which we can construct a graph of girth 6 that is embeddable on a surface of Euler genus $g$ but is not $(0, 0, \lceil\epsilon g\rceil)$-colorable.

\section{Graphs of girth at least 7 on surfaces}

\begin{thm}\label{thm:g7}
Every graph of girth at least 7 embeddable on a surface of Euler genus $g$ is $(0, K)$-colorable where $K=K(g)=5+\lceil\sqrt{14g+22}\rceil$. 
\end{thm}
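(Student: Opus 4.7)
I follow the minimal-counterexample-plus-discharging template used in Theorems~\ref{thm:2kk}, \ref{thm:00kk}, and~\ref{tri-free}. Take a counterexample $G$ with Euler genus $g$ minimum, and subject to this with $|V(G)|+|E(G)|$ minimum; standard reductions ensure $G$ is connected, $2$-cell embedded on a surface of Euler genus $g$, and has minimum degree at least $3$. Call a vertex \emph{high} if its degree is at least $K+2$, and \emph{low} otherwise. Lemma~\ref{lem:vx-degree} with $(j,k)=(1,2)$ gives that every low vertex has at least one high neighbor, and Lemma~\ref{lem:num-high} gives at least two high vertices; the key will be to boost this second bound to one that is linear in $K$.

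\textbf{Strengthening and discharging.} To prove that the set $S$ of high vertices has size at least $K+c_0$ for a small constant $c_0$, I would assume $|S|$ is smaller than the target and construct a $(0,K)$-coloring of $G$ directly, contradicting minimality. The recipe is to choose a $2$-coloring of $S$ in which color~$1$ is a stable set of $G$ and each color-$2$ vertex will end up with at most $K$ color-$2$ neighbors (which the girth-$7$ hypothesis makes possible when $|S|$ is small, since it forbids short cycles inside~$S$), and then extend to $V(G)\setminus S$ greedily: at each low vertex $v$ assign color~$1$ if no earlier-colored neighbor is colored~$1$, and color~$2$ otherwise. A low vertex forced into color~$2$ has a color-$1$ neighbor and degree at most $K+1$, hence at most $K$ color-$2$ neighbors overall. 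For the discharging phase, set $\mu(v)=5d(v)-14$ for each vertex and $\mu(f)=2d(f)-14$ for each face; by Euler's formula $\sum\mu = 14g-28$. Every face has $d(f)\ge 7$, so starts with charge at least $0$, and every vertex has $d(v)\ge 3$, so starts with charge at least $1$. The discharging rules would push each low vertex's surplus $5d(v)-14$ onto its high neighbors, with auxiliary contributions from faces of degree at least $8$ as needed, in such a way that every low vertex and face ends nonnegative while every high vertex of degree $d$ ends with charge at least $\alpha d-\beta$ for suitable constants $\alpha,\beta>0$. Combining $\sum_{v\in S}\mu^*(v)\le 14g-28$ with $|S|\ge K+c_0$ and $\mu^*(v)\ge \alpha(K+2)-\beta$ for $v\in S$ yields a quadratic inequality in $K$ that contradicts $K=5+\lceil\sqrt{14g+22}\rceil$, which satisfies $K^2-10K+3\ge 14g$.

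\textbf{Tightness and main obstacle.} For the matching lower bound I would adapt the Descartes-style construction of Subsection~\ref{subsection-tight} to girth~$7$: start with a bounded number of pairwise disjoint $7$-cycles and, for each transversal choice of one vertex per cycle, attach $\Theta(k)$ new $7$-cycles via matchings that preserve the girth. A pigeonhole argument along the attached $7$-cycles shows that any $(0,k)$-coloring of the resulting graph must fail on some $7$-cycle. Counting shows the construction has $\Theta(k^2)$ edges, hence Euler genus $O(k^2)$, producing, for infinitely many $g$, a girth-$7$ graph with no $(0,\lfloor c_2\sqrt{g}\rfloor)$-coloring. The main obstacle is the calibration of the two ingredients of the upper bound together: the strengthened high-vertex claim must be linear in $K$ (rather than the constant $2$ given by Lemma~\ref{lem:num-high}), and the discharging must be designed so that the final charge at each high vertex also grows linearly in $K$, so that the combined inequality matches $K^2-10K+3\ge 14g$ to within $O(1)$.
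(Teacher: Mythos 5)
Your overall skeleton (minimal counterexample, boost the count of high vertices using the girth hypothesis, then discharge so that high vertices retain charge linear in $K$) matches the paper's proof, and the target inequality $K^2-10K+3\ge 14g$ is essentially the one the paper closes with. However, three details are off in ways that would matter in a write-up.

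First, the discharging flows in the wrong direction in your description. You say low vertices push their surplus $5d(v)-14$ onto their high neighbors, aided by $8^+$-faces. The paper does the opposite: each \emph{high} vertex sends charge $4$ to each of its neighbors, faces are never touched (every face already has $d(f)\ge 7$, hence $2d(f)-14\ge 0$), and a non-high vertex $v$ has $5d(v)-14\ge -4$ and collects $4$ from its guaranteed high neighbor. Pushing surplus from low to high cannot make high vertices end up with final charge linear in $K$, which is what the quadratic count needs; the point is precisely that a high vertex keeps $5d(v)-14-4d(v)=d(v)-14\ge K-12$ \emph{after} paying its neighbors. Also note the paper only needs minimum degree $2$, not $3$.

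Second, your plan for the high-vertex count (``choose a $2$-coloring of $S$ in which color~$1$ is a stable set of $G$'') is not what the paper does and would need extra justification: a color-$1$ high vertex would force \emph{all} its many neighbors to color~$2$, which is hard to control. The paper instead colors the entire high set $H$ with $k$, then takes a maximum independent set $S\subseteq M$ in the neighborhood $M=N(H)\setminus H$, colors $S$ with $0$ and $M\setminus S$ with $k$, and finishes greedily. The girth-$7$ hypothesis is used precisely to show that, for a high vertex $v$, the map sending each $k$-colored neighbor $u\in M$ to a second high vertex $u_2$ (via a $0$-colored $u_1\in M$ adjacent to $u$) is injective and avoids $v$'s $k$-colored neighbors inside $H$, giving the bound $K$ on $v$'s $k$-degree and hence $|H|\ge K+2$.

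Third, and most seriously, the tightness construction you propose does not work. A Descartes-style graph with a bounded number of seed $7$-cycles and $\Theta(k)$ attached $7$-cycles per transversal has only $\Theta(k)$ edges, hence Euler genus $O(k)$, which is the wrong order for a $\sqrt{g}$ bound (and would even contradict the upper bound). Moreover, the Descartes pigeonhole hinges on a cycle needing a vertex of the ``large'' color; with only two colors $(0,K)$ this fails, since a $7$-cycle can be colored entirely with $k$. The paper's lower bound instead uses a ``$2$-star'' with $3K+2$ subdivided rays, plus a $3$-edge path between every pair of leaves, and two such gadgets joined at their centers; the quadratic number $\binom{|L|}{2}$ of inter-leaf paths is what produces $\Theta(K^2)$ edges and forces some leaf to have more than $K$ neighbors colored $k$. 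You would need a construction of this flavor, not a Descartes adaptation, to get the matching $\Omega(\sqrt{g})$ growth.
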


\begin{proof}

Assume for the sake of contradiction that there is a graph $G$ with girth at least $7$ embeddable on a surface of Euler genus $g$ that is not $(0,K)$-colorable. We choose $g$ minimum, and with respect to this, we choose $G$ such that the sum of the number of vertices and the number of edges is minimum. By the minimality of $g$ we may assume that $G$ is cellularly embedded on a surface of Euler genus $g$ (see~\cite[Propositions 3.4.1 and 3.4.2]{MoTh}) and from now on, we fix this embedding. 

By the minimality of $G$, we can also assume that $G$ is connected and has minimum degree at least 2.
A \emph{high} and \emph{low} vertex is a vertex of degree at least $K+2$ and at most $K+1$, respectively. 
By Lemma~\ref{lem:vx-degree}, every non-high vertex is adjacent to at least one high vertex.
By Lemma~\ref{lem:num-high}, $G$ contains at least two high vertices. 
By using the fact that the graph has girth at least $7$, we can guarantee more high vertices. 

We will also assume that for a (partial) $(0, K)$-coloring $\varphi$ of $G$, the two colors will be $0$ and $k$, and the graph induced by the color 0 and $k$ has maximum degree at most $0$ and at most $K$, respectively. 

\begin{claim}
There are at least $K+2$ high vertices. 
\end{claim}
\begin{proof}
Assume for the sake of contradiction that the set $H$ of high vertices has size at most $K+1$. 
First color all the vertices of $H$ with the color $k$.
Let $M$ be the set of vertices not in $H$ that have at least one neighbor in $H$, and let $S$ be a maximum independent set in $M$. 
Now color all vertices of $S$ with the color $0$ and color all vertices of $M-S$ with the color $k$. 
For the remaining vertices, we proceed by a greedy algorithm: if a vertex $v$ has a neighbor colored $0$, then use color $k$ on $v$, otherwise, use color $0$ on $v$. 

We now show that this coloring is indeed a $(0, K)$-coloring of $G$. 
For a vertex $v$ in $H$, the neighbors of $v$ that are colored with $k$ are  partitioned into two sets $T_1$ and $T_2$ where $T_1\subseteq H$ and $T_2\subseteq M$. 
Consider a vertex $u\in T_2$. It follows from the definition of $S$ that $u$ is adjacent to a vertex $u_1$ in $M$ that is colored $0$.
This vertex $u_1$ must have a neighbor $u_2$ in $H$, since $u_1$ is in $M$. 
Moreover, since $G$ has girth at least $7$, we know that $u_2\not\in T_1$ and for any two vertices $u,w\in T_2$, we have $u_2=w_2$ if and only if $u=w$. 
Therefore the number of neighbors of $v$ that are colored with $k$ is at most $|T_1|+|T_2|\leq |H|-1\leq K$. 

A vertex in $V(G)-H$ that is colored with $k$ must be adjacent to a
vertex of color $0$, and thus has at most $K$ neighbors colored with
$k$. It is easy to check that no vertex in $V(G)-H$ that is colored
with $0$ has a neighbor colored with $0$. 
Hence, we obtain a $(0,K)$-coloring of $G$, which is a contradiction. It follows that there are at least $K+2$ high vertices. 
\end{proof}

We will use the discharging procedure laid out in Subsection~\ref{subsec:discharging}.
For a vertex $v$ and a face $f$ of $G$, the initial charge is $5d(v)-14$ and $2d(f)-14$, respectively. 
The initial charge sum is $14g-28$ by Euler's formula. 

Here is the unique discharging rule:

\begin{enumerate}[(R1)]

\item Every high vertex $v$ sends charge 4 to each of its neighbors

\end{enumerate}

We now analyze the charge of each vertex and each face after the discharging procedure.

Observe that the charge of a face remains the same, and since $G$ has girth at least 7, all faces have nonnegative final charge. 
A non-high vertex $v$ starts with initial charge $5d(v)-14\ge -4$ and receives a charge of 4 from each of its (at least one) high neighbors, and therefore the final charge of $v$ is also nonnegative. 
Finally, since a high vertex $v$ sends a charge of 4 to each of its neighbors, its final charge is $5d(v)-14-4d(v)=d(v)-14\ge K+2-14=K-12$.

Consequently, the total charge $14(g-2)$ is at least $(K+2)(K-12)$. 
This is equivalent to $K^2-10K+4-14g\leq 0$,
 which contradicts the definition of $K$ ($K$ satisfies $K^2-10K+4-14g> 0$).
\end{proof}

\subsection{Tightness example}
We now prove that the bound on $K(g)$ in the statement of
Theorem~\ref{thm:g7} is best possible, up to a multiplicative constant factor.
We construct, for some constant $c>0$ and infinitely many values of $g$, a graph of girth at least 7 embeddable on a surface of Euler genus $g$, with no $(0,K)$-coloring where $K=K(g)=\lfloor c\sqrt{g}\rfloor$.

In a $(0, K)$-coloring, let $0$ and $k$ be the two colors where the vertices of color $0$ and $k$ induce a graph of maximum degree at most $0$ and $K$, respectively. 
A \emph{2-star} is obtained from a star by subdividing every edge once. Take a 2-star with $3K+2$ leaves, and for any two leaves $u$ and $v$, add an edge between $u$ and $v$ and then subdivide this edge exactly twice (in other words, replace it by a path on 3 edges).
Let $S_K$ be the resulting graph. Now, take two copies of $S_K$, and
join their centers by an edge (see Figure~\ref{fig-g7non0k} for the
case $K=1$). The resulting graph has $
(3K+2)(6K+6)+2$ vertices and $(3K+2)(9K+7)+1$ edges, and therefore has Euler genus at most $(3K+2)(3K+1)=9K^2+9K+2$.

At least one of the two centers is colored with $k$. 
Consider the corresponding copy of $S_K$. At least $2K+2$ of the neighbors of the center (in the copy of $S_K$) are colored with $0$. 
The corresponding $2K+2$ leaves of the 2-star are then colored with $k$. 
Let $L$ be the set of these leaves, and let $D$ be the sum, over all vertices $v$ of $L$, of the number of neighbors of $v$ colored $k$. 
Observe that in each added path on 3 edges, at least one of the newly added vertices is colored with $k$, so each added path on 3 edges between two vertices of $L$ contributes at least 1 to $D$. Since there are $|L|(|L|-1)/2$ such paths, at least one of the vertices of $L$ has at least $(|L|-1)/2$ vertices colored $k$.
If $(|L|-1)/2>K$, then this is a contradiction.

It follows that there is a constant $c>0$ and infinitely many values of $g$, for which we can construct a graph of girth at least 7 embeddable on a surface of Euler genus $g$, with no $(0,\lfloor c\sqrt{g}\rfloor)$-coloring.

\begin{figure}[ht]
	\begin{center}
  \includegraphics[scale=0.9]{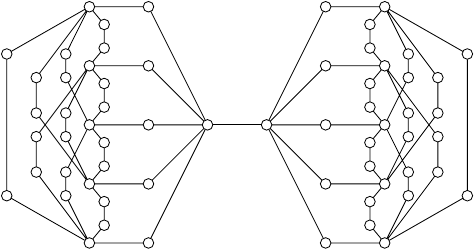}
  \caption{A construction of a graph with girth 7 that is not $(0, K)$-colorable.}
  \label{fig-g7non0k}
	\end{center}
\end{figure}

\section{Open problems}

A natural question is to find a version of Theorem~\ref{thm:g7} for
graphs of arbitrary large girth. A slight variation of the proof of
Theorem~\ref{thm:g7} easily shows that a graph of girth at least
$\ell$ embeddable on a surface of Euler genus $g$ is $(0,
O(\sqrt{g/\ell}))$-colorable, where the hidden constant depends on
neither $g$ nor $\ell$. In an early version of this manuscript, we conjectured the following stronger statement.

\begin{conj}\label{conj}
There is a function $c=o(1)$ such that any graph of girth at least $\ell$ embeddable on a surface of Euler genus $g$ is $(0, O(g^{c(\ell)}))$-colorable.
\end{conj}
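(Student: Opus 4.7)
The plan is to approach Conjecture~\ref{conj} via an iterative decomposition combined with a planar base case. The starting point is the template of Theorem~\ref{thm:deg}: I would produce, for $G$ of girth at least $\ell$ on a surface of Euler genus $g$, a connected subgraph $H$ such that $G/H$ is planar and every vertex of $G$ has at most $F(g,\ell)$ neighbors in $H$. Cutting along a shortest non-contractible cycle $C$ (which, thanks to the girth hypothesis, has length at least $\ell$) still obeys Observation~\ref{obs:shortest}, so each vertex gains at most three additional neighbors in $H$ per cut. Together with the color-extension scheme used in the proofs of Theorems~\ref{thm:000k} and~\ref{thm:22k}, one would then get $K\le F(g,\ell)+K'(\ell)$, where $K'(\ell)$ is the $(0,K')$-colorability threshold for \emph{planar} graphs of girth at least $\ell$. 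The planar case of Theorem~\ref{thm:g7} already supplies a finite $K'(7)$, and for large $\ell$ one expects even smaller values.

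The crux is to make $F(g,\ell)$ sub-polynomial in $g$. The naive cut-and-recurse scheme gives $F=O(g)$ independent of $\ell$, which is useless for the conjecture; the slight variation of Theorem~\ref{thm:g7} mentioned in the open problems section improves this only to $O(\sqrt{g/\ell})$, still with exponent $\tfrac12$ in $g$. To break the square-root barrier I would try a \emph{layered} decomposition: after producing $H_0$, identify a new short non-contractible structure in the remainder, thicken it by a BFS layer of radius depending on $\ell$, and iterate. The $3$-Path Property combined with the girth-$\ell$ lower bound on cycles should allow each layer to contribute only $O(1)$ to the defect at each vertex while reducing the effective genus of the remainder by a multiplicative factor $\alpha(\ell)>1$. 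If this can be achieved, then $O(\log_{\alpha(\ell)} g)$ rounds suffice, and the total defect would be $(\log g)^{O(1)}$, which is bounded by $g^{c(\ell)}$ for some $c(\ell)\to 0$.

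The main obstacle, and presumably the reason the statement is posed as a conjecture, is that contraction typically destroys the girth hypothesis: identifying two vertices can introduce triangles, so the iteration cannot simply re-apply the same argument to the contracted graph. I would try to circumvent this by passing to a list-coloring formulation: instead of contracting $H_i$, keep all its vertices but maintain at each boundary vertex a short list of colors forbidden by the commitments already made for $H_i$. The task at each step becomes to prove a list-coloring variant of Theorem~\ref{thm:g7} that remains feasible under a bounded number of pre-forbidden colors, and to show that the size of these lists does not blow up over the course of the iteration. Ensuring that the forbidden-color structure interacts well with the high-girth discharging argument, and in particular that the contribution of high-degree vertices to each layer remains controlled, is the deepest technical step I foresee, and it may require genuinely new structural insight into how short cycles can arise around high-degree vertices on surfaces.
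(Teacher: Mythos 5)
This statement is explicitly posed as a conjecture in the paper's Open Problems section; the paper supplies no proof, only a lower bound (Gimbel--Thomassen) constraining what any such function $c(\ell)$ could look like. Your submission is accordingly not a proof but a speculative plan, and you acknowledge as much. I will therefore assess whether the plan could in principle work.

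It cannot, at least not in the form you describe, and the obstruction is already in the paper. You claim that a layered cut-and-thicken scheme ``should allow each layer to contribute only $O(1)$ to the defect at each vertex while reducing the effective genus of the remainder by a multiplicative factor $\alpha(\ell)>1$,'' and that this would yield a total defect of $(\log g)^{O(1)}$. But a $(0,K)$-coloring implies a proper $(K+2)$-coloring, so a polylogarithmic defect would give a polylogarithmic chromatic number for all girth-$\ell$ graphs of Euler genus $g$. This directly contradicts the theorem of Gimbel and Thomassen cited at the end of the paper, which produces girth-$\ell$ graphs of Euler genus $g$ with chromatic number at least $c\,g^{(1-\epsilon)/(2\ell+2)}$ for fixed $\ell$. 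So for every fixed $\ell$ one must have $K = \Omega\bigl(g^{(1-\epsilon)/(2\ell+2)}\bigr)$, i.e.\ genuinely polynomial in $g$; a plan whose success criterion is ``$(\log g)^{O(1)}$'' is refuted by the very lower bound it is supposed to match.

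There is also an unaddressed structural gap in the proposed iteration. Cutting along a single shortest non-contractible cycle reduces the Euler genus \emph{additively} by at most $2$ (or, for a separating cycle, merely splits the genus between two pieces). To achieve a \emph{multiplicative} reduction of the genus in one round you would have to cut along $\Theta(g)$ cycles simultaneously, and you give no argument why a fixed vertex would then gain only $O(1)$ new neighbors in the union of all those cycles and thickening layers; a priori it could gain $\Theta(g)$. This is exactly the quantitative bottleneck that Theorem~\ref{thm:deg} hits, and your sketch does not explain how girth helps overcome it. Your observation that contraction destroys the girth hypothesis is correct and is indeed one reason the iteration from Theorems~\ref{thm:000k}--\ref{thm:22k} does not transfer, but the list-coloring workaround you gesture at is left entirely unspecified and does not address the genus-reduction rate issue above. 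In short: no gap is closed, one step is demonstrably false as stated, and the proposal does not constitute progress on the conjecture.
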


Note that a graph that is $(0,k)$-colorable has a proper coloring with
$k+2$ colors (since a graph with maximum degree $k$ has a proper
$(k+1)$-coloring). As a consequence, the following result of Gimbel
and Thomassen~\cite{GT97} gives a lower bound of the order
$\tfrac1{2\ell+2}$ on such a function $c$.

\begin{thm}[\cite{GT97}]
For any $\ell$, there exist a constant $c>0$ such that for arbitrarily small $\epsilon>0$ and sufficiently large $g$, there are graphs of girth at least $\ell$ embeddable on surfaces of Euler genus $g$ that have no proper coloring with less than $c\, g^{\tfrac{1-\epsilon}{2\ell+2}}$ colors.
\end{thm}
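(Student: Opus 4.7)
The plan is to deduce this lower bound from Erd\H{o}s's classical probabilistic theorem on graphs with large girth and chromatic number, combined with the fact that the Euler genus of a graph is at most its cycle rank (via a one-face cellular embedding).

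First, I would invoke the following quantitative form of Erd\H{o}s's 1959 theorem. For each fixed $\ell \geq 3$, choose $\theta$ slightly less than $1/(\ell-1)$ and consider the random graph $G(n,p)$ with $p = n^{\theta-1}$. A first-moment calculation shows that the expected number of cycles of length less than $\ell$ is $O(n^{(\ell-1)\theta}) = o(n)$, and a standard union-bound argument shows that $\alpha(G(n,p)) = O(n^{1-\theta}\log n)$ with high probability. Deleting one vertex from each short cycle produces a graph $H$ with at least $n/2$ vertices, girth at least $\ell$, independence number $O(n^{1-\theta}\log n)$, and at most $O(n^{1+\theta})$ edges; in particular $\chi(H) \geq (n/2)/\alpha(H) = \Omega(n^{\theta}/\log n)$.

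Next, I would bound the Euler genus of $H$. Every connected graph on $n'$ vertices with $m$ edges admits a cellular embedding with a single face on some (possibly non-orientable) surface, so by Euler's formula its Euler genus is at most $m - n' + 1$. Summing over components, $\eg(H) \leq O(n^{1+\theta})$; by attaching handles inside a face of the embedding, we may embed $H$ on any surface of Euler genus $g \geq \eg(H)$. Writing $n \asymp g^{1/(1+\theta)}$ gives
\[
\chi(H) \;\geq\; \Omega\!\left(\frac{g^{\theta/(1+\theta)}}{\log g}\right).
\]
Letting $\theta \to 1/(\ell-1)$, the exponent $\theta/(1+\theta)$ tends to $1/\ell$, which already yields the much stronger bound $\chi(H) \geq g^{1/\ell - o(1)}$, and in particular the claimed bound $\chi(H) \geq c\, g^{(1-\epsilon)/(2\ell+2)}$ for all sufficiently large $g$.

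The main technical obstacle is making the Erd\H{o}s deletion step fully quantitative: one must simultaneously control the short-cycle count via Markov's inequality and the independence number via a union bound over subsets of size $\lceil 3 n^{1-\theta}\log n\rceil$, then verify that both bad events fail with probability bounded away from $1$ so that a suitable instance of $G(n,p)$ exists. A minor but necessary point is to realise the embedding on every sufficiently large Euler genus rather than only on one specific $g_0$ depending on $n$; the handle-attachment observation handles this, and the logarithmic factor in $\chi(H)$ is comfortably absorbed into the $\epsilon$-slack in the exponent.
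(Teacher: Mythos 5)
This statement is cited from Gimbel and Thomassen \cite{GT97}; the paper under review does not prove it, so there is no in-paper proof to compare against. I will therefore assess your argument on its own merits.

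Your proof is correct, and it in fact yields a \emph{stronger} exponent than the statement requires. The Erd\H{o}s deletion argument with $p = n^{\theta-1}$ and $\theta$ slightly below $1/(\ell-1)$ produces, with positive probability, a graph $H$ on $\Theta(n)$ vertices with girth at least $\ell$, $\alpha(H) = O(n^{1-\theta}\log n)$, and $|E(H)| = O(n^{1+\theta})$, hence $\chi(H) = \Omega(n^{\theta}/\log n)$. The Euler genus bound is the crucial second ingredient, and your cycle-rank bound is right; note that you do not even need the one-face cellular embedding theorem: \emph{any} cellular embedding of a connected graph on $n'$ vertices and $m$ edges has at least one face, so Euler's formula gives Euler genus $m - n' + 2 - f \le m - n' + 1$, and summing over components (Euler genus is additive over components) gives $\eg(H) \le |E(H)| = O(n^{1+\theta})$. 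Setting $n \asymp g^{1/(1+\theta)}$ and letting $\theta \uparrow 1/(\ell-1)$ gives $\chi(H) \ge g^{1/\ell - o(1)}$, and since $1/\ell > (1-\epsilon)/(2\ell+2)$ for every $\epsilon>0$ and $\ell \ge 1$, this implies the claimed bound (with $c=1$) for $g$ sufficiently large. Two small points worth tightening: (i) to realise the embedding on every Euler genus $g \ge \eg(H)$, add \emph{crosscaps} rather than handles, since a handle jumps the Euler genus by $2$ and would miss the odd values; (ii) when you say ``both bad events fail with probability bounded away from $1$,'' it is cleanest to use Markov's inequality on the short-cycle count and on $|E|$, and the high-probability bound on $\alpha$, and then take a union bound so that some outcome satisfies all three simultaneously.
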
 

It was subsequently observed by Fran\c cois Dross that an argument similar to that
of the proof of Theorem~\ref{thm:g7} shows that if the girth is at
least $6t+1$, then there are $K^t$ vertices of degree at
least $K$; we just have to consider paths of length $3t$ starting from
some vertex $v$. Using a similar computation as in the proof of
Theorem~\ref{thm:g7}, this shows that any graph of girth at least
$\ell$ embeddable on a surface of Euler genus $g$ is $(0,
O(g^{6/(\ell+5)}))$-colorable, which proves Conjecture~\ref{conj}.

\section*{Acknowledgments}

The authors would like to thank Gwena\"el Joret for the interesting
discussions on short non-contractible cycles, Fran\c cois Dross for
allowing us to mention his remark about Conjecture~\ref{conj}, and a
reviewer for the excellent suggestions.

\end{document}